\begin{document}

\newtheorem{theorem}{Theorem}[section]
\newtheorem{lemma}[theorem]{Lemma}
\newtheorem{proposition}[theorem]{Proposition}
\newtheorem{corollary}[theorem]{Corollary}
\newtheorem{bigtheorem}{Theorem}

\theoremstyle{definition}
\newtheorem{definition}[theorem]{Definition}
\newtheorem{example}[theorem]{Example}
\newtheorem{formula}[theorem]{Formula}
\newtheorem{nothing}[theorem]{}

\theoremstyle{remark}
\newtheorem{remark}[theorem]{Remark}

\renewcommand{\arraystretch}{1.2}

\newcommand{\dual}{^{\vee}}
\newcommand{\contr}{{\mspace{1mu}\lrcorner\mspace{1.5mu}}}
\newcommand{\de}{\partial}
\newcommand{\debar}{{\overline{\partial}}}
\newcommand{\pibar}{\overline{\pi}}

\newcommand{\desude}[2]{{\dfrac{\de #1}{\de #2}}}

\newcommand{\mapor}[1]{{\stackrel{#1}{\longrightarrow}}}
\newcommand{\ormap}[1]{{\stackrel{#1}{\longleftarrow}}}

\newcommand{\mapver}[1]{\Big\downarrow\vcenter{\rlap{$\scriptstyle#1$}}}

\newcommand{\binfty}{\boldsymbol{\infty}}
\newcommand{\bi}{\boldsymbol{i}}
\newcommand{\bl}{\boldsymbol{l}}

\renewcommand{\bar}{\overline}
\renewcommand{\Hat}[1]{\widehat{#1}}

\newcommand{\sA}{\mathcal{A}}
\newcommand{\Oh}{\mathcal{O}}
\newcommand{\sH}{\mathcal{H}}
\newcommand{\sL}{\mathcal{L}}
\newcommand{\sM}{\mathcal{M}}
\newcommand{\sB}{\mathcal{B}}
\newcommand{\sY}{\mathcal{Y}}

\newcommand{\Q}{\mathbb{Q}}
\newcommand{\C}{\mathbb{C}}
\newcommand{\K}{\mathbb{K}}
\newcommand{\Proj}{\mathbb{P}}

\newcommand{\DER}{{{\mathcal D}er}}

\newcommand{\ad}{\operatorname{ad}}
\newcommand{\MC}{\operatorname{MC}}
\newcommand{\Def}{\operatorname{Def}}
\newcommand{\Hom}{\operatorname{Hom}}
\newcommand{\End}{\operatorname{End}}
\newcommand{\Image}{\operatorname{Im}}
\newcommand{\image}{\operatorname{Im}}
\newcommand{\Der}{\operatorname{Der}}
\newcommand{\Mor}{\operatorname{Mor}}
\newcommand{\Cone}{\operatorname{Cone}}
\newcommand{\Aut}{\operatorname{Aut}}
\newcommand{\coker}{\operatorname{Coker}}

\newcommand{\Grass}{\operatorname{Grass}}
\newcommand{\Spec}{\operatorname{Spec}}

\newcommand{\Id}{\operatorname{Id}}



\title
{A period map for generalized deformations}
\date{July 25, 2008}

\author{Domenico Fiorenza}
\address{\newline Dipartimento di Matematica ``Guido
Castelnuovo'',\hfill\newline
Universit\`a di Roma ``La Sapienza'',\hfill\newline
P.le Aldo Moro 5,
I-00185 Roma Italy.\hfill\newline}
\email{fiorenza@mat.uniroma1.it}
\urladdr{www.mat.uniroma1.it/\~{}fiorenza/}
\author{Marco Manetti}
\email{manetti@mat.uniroma1.it}
\urladdr{www.mat.uniroma1.it/people/manetti/}

\begin{abstract}
For every compact K\"{a}hler manifold we give a canonical
extension of Griffith's period map to generalized deformations,
intended as solutions of Maurer-Cartan equation in the algebra of
polyvector fields. Our construction involves the notion of Cartan
homotopy and a canonical $L_{\infty}$ structure on mapping cones of
morphisms of differential graded Lie algebras.
\end{abstract}

\subjclass{14D07, 17B70, 13D10}
\keywords{Differential graded Lie algebras, symmetric coalgebras,
$L_{\infty}$-algebras, functors of Artin rings, K\"{a}hler manifolds, period map}

\maketitle

\section*{Introduction}

Let $X$ be a compact K\"{a}hler manifold and denote by $H^*(X,\C)$
the graded vector space of its De Rham cohomology. The goal of
this paper is to define a natural transformation
\[\Phi\colon \Def_X\to \Aut_{H^*(X,\C)}\]
from infinitesimal deformations of $X$ to automorphisms of
$H^*(X,\C)$. More precisely, for every local Artinian $\C$-algebra
$A$ and every deformation  of $X$ over $\Spec(A)$ we define in a
functorial way a canonical morphism of schemes
\[ \Spec(A)\to GL(H^*(X,\C))=\prod_n GL(H^n(X,\C)).\]

Our construction will be carried out by using the interplay of
Cartan homotopies and $L_{\infty}$-morphisms  and it is compatible
with classical  construtions of the theory of infinitesimal variations of Hodge structures. In particular:
\begin{enumerate}

\item Via the natural isomorphism $H_X\simeq
\oplus_{p,q}H^q(\Omega^p_X)$ induced by Dolbeault's theorem
and the
$\de\debar$-lemma, the differential of $\Phi$
\[
d\Phi\colon H^1(T_X)\to \Hom^0(H^*(X,\C),H^*(X,\C))
\]
is identified with the
contraction operator: $d\Phi(\xi)=\bi_{\xi}$,
where $\bi_{\xi}(\omega)=\xi\contr\omega$.

\item The contraction
\[ \bi\colon H^2(T_X)\to \Hom^1(H^*(X,\C),H^*(X,\C))\]
is a morphism of obstruction theories. In particular, since
$\Aut_{H^*(X,\C)}$ is smooth, every obstruction to deformation of
$X$ is contained in the kernel of $\bi$.

\item For every $m$ let $H^*(F^m)\subseteq H^*(X,\C)$ be  the subspace
of cohomology classes of closed $(p,q)$-forms, with $p\ge m$. Then
the composition of $\Phi$ with the natural projection
\[ GL(H^*(X,\C))\to \Grass(H^*(X,\C))=\prod_n \Grass(H^n(X,\C)),\qquad f\mapsto
f(H^*(F^m)),\] is the classical period map.

\end{enumerate}

We will define and study the morphism $\Phi$ using the framework
of $L_{\infty}$-algebras. It is however useful to give also a more
geometric definition in the following way.\par

Denote by $A_X=\oplus_{i}A^{i}_X$ the space of complex valued
differential forms on $X$, by $d=\de+\debar\colon A_X^i\to
A_X^{i+1}$ the De Rham differential and by $\de A_X\subseteq A_X$
the subspace of $\de$-exact forms. A small variation of the almost
complex structure is determined by a form $\xi\in A_X^{0,1}(T_X)$:
according to Newlander-Niremberg theorem, the integrability
condition of $\xi$  is equivalent to $(d+\bl_{\xi})^2=0$, where
$\bl_{\xi}\colon A^i_X\to A^{i+1}_X$ is the holomorphic Lie
derivative, defined by the formula
\[ \bl_{\xi}(\omega)=\de(\xi\contr\omega)+\xi\contr \de\omega.\]
Notice that $\bl_{\xi}(\ker\de)\subseteq \de A_X$.\par

Assume therefore $(d+\bl_{\xi})^2=0$; according to
$\de\debar$-lemma, the complex $(\de A_X,d)$ is
acyclic and then, if $\xi$ is sufficiently small, the complex
$(\de A_X,d+\bl_{\xi})$ is still acyclic.\par

In order to define the automorphism $\Phi_{\xi}\colon
H^*(X,\mathbb{C})\to H^*(X,\mathbb{C})$,   let $[\omega]\in
H^*(X,\mathbb{C})$ and choose a $d$-closed form
$\omega_0^{}\in A_X$ representing $[\omega]$ and such that
$\de\omega_0^{}=0$. Since
\[ (d+\bl_{\xi})\omega_0^{}=\de(\xi\contr\omega_0^{})\in \de
A_X
\quad\text{and}\quad (d+\bl_{\xi})^2\omega_0^{}=0,\] there
exists $\beta\in A_X$ such that
$(d+\bl_{\xi})\omega_0^{}=(d+\bl_{\xi})\de\beta$. If
$\bi_{\xi}$ is the contraction, then it is not difficult to
prove that
$d(e^{\bi_{\xi}}(\omega_0^{}-\de\beta))=0$ and the
cohomology class of
$e^{\bi_{\xi}}(\omega_0^{}-\de\beta)$ does not depend on
the choice of $\beta$ and $\omega_0^{}$, allowing to define
$\Phi_{\xi}([\omega])$  as the cohomology class of
$e^{\bi_{\xi}}(\omega_0^{}-\de\beta)$.

Equivalently, for every $d$-closed form $\omega\in A_X$ and every
small variation of the complex structure $\xi$ we have
\[
\Phi_{\xi}([\omega])=[e^{\bi_{\xi}}(\omega-d\gamma-\de\beta)],\quad\text{
where }\quad \de\omega=\de d\gamma\, ,\quad
\bl_{\xi}(\omega-d\gamma)=(d+\bl_{\xi})\de\beta.\]

Moreover, as direct consequence of the $L_{\infty}$ approach,
we will see that $\Phi_{\xi}$ is invariant under the gauge
action, where two integrable small variation of the almost
complex structure
$\xi_1,\xi_2$ are gauge equivalent if and only if they give
isomorphic deformations of $X$.

Our construction generalizes in a completely straightforward way
to \emph{generalized deformations} of $X$,
defined as the solutions,
up to gauge equivalence, of the Maurer-Cartan equation in the
differential graded Lie algebra
\[\operatorname{Poly}_X=\bigoplus_i
\operatorname{Poly}^{i}_X,\qquad \operatorname{Poly}^{i}_X=
\bigoplus_{b-a=i-1} A_X^{0,b}(\wedge^{a}T_X),\] endowed with the
opposite of Dolbeault differential and the Schouthen-Nijenhuys
bracket.

Putting together all these fact, at the end we get, for every $m$,
a commutative diagram of morphism of functors of Artin rings
\[\xymatrix{\widetilde{\Def}_X\ar[r]^{\Phi}&
\Aut_{H^*(X,\C)}\mspace{-30mu}\ar[d]^{\pi}\\
\Def_X\ar[u]^{i}\ar[r]^{p\quad}&\Grass_{H^*(F^m),H^*(X,{\mathbb
C})}\mspace{-90mu}}\] where $\widetilde{\Def}_X$ is the functor of
generalized deformations of $X$, $i$ is the natural inclusion,
$\Grass_{H^*(F^m),H^*(X,{\mathbb C})}$ is the Grassmann functor
with base point $H^*(F^m)$, $\pi$ is the smooth morphism defined
as $\pi(f)=f(H^*(F^m))$ and $p$ is the classical $m^{\text{th}}$
period map.\par

In view of this result is natural to candidate the composition
$\pi\Phi$ as period map for generalized deformations.

\begin{example} Our definition  is compatible with yet existing
notion of period  map for generalized deformations of Calabi-Yau
manifolds used in some mirror symmery constructions.\par

In fact, if  $X$ is a Calabi-Yau manifold with volume element
$\Omega$, then by Tian-Todorov Lemma , every generalized
deformation over $\Spec(A)$ is represented by an element $\xi\in
\operatorname{Poly}^{1}_X\otimes\mathfrak{m}_A$ such that
\[ D\xi+\frac{1}{2}[\xi,\xi]=0,\qquad \de(\xi\contr\Omega)=0.\]
Under these assumptions our recipe  gives
\[ \Phi_{\xi}([\Omega])=[e^{\bi_{\xi}}(\Omega)]\]
and therefore we recover the construction of \cite{BK}.
\end{example}

\textbf{Keywords and general notation.}

We assume that the reader is familiar with the notion and main
properties
of differential graded Lie algebras and
$L_{\infty}$-algebras (we refer to
\cite{fuka,K,LadaMarkl,LadaStas,defomanifolds}
as introduction of such structures); however
the basic definitions are recalled in this paper in order to fix
notation and terminology. 
For the whole paper, $\mathbb{K}$ is a field of characteristic 0;
every vector space is intended  over $\mathbb{K}$. $\mathbf{Art}$
is the category of local Artinian $\K$-algebras with residue field
$\K$. For $A\in\mathbf{Art}$ we denote by $\mathfrak{m}_A$ the
maximal ideal of $A$. By abuse of notation, if $F\colon
\mathbf{Art}\to \mathbf{Set}$ is a functor, we write $\xi\in F$ to
mean $\xi\in F(A)$ for some fixed $A\in \mathbf{Art}$.

\section{Deformation functors associated with DGLA morphisms}
\label{sec:dgla-morphism}

We recall from \cite{semireg} that, to any morphism of
differential graded Lie algebras over a field $\K$ of
characteristic $0$, $\chi\colon L\to M$, are naturally associated
two functors of Artin rings $\MC_{\chi},\Def_{\chi}\colon
\mathbf{Art}\to \mathbf{Set}$,
in the following way:
\[ \MC_{\chi}(A)=
\left\{(x,e^a)\in (L^1\otimes\mathfrak{m}_A)\times \exp(M^0\otimes
\mathfrak{m}_A)\mid
dx+\frac{1}{2}[x,x]=0,\;e^a\ast\chi(x)=0\right\},\]
\[ \Def_\chi(A)=\frac{\MC_{\chi}(A)}{\text{gauge equivalence}},\]
where two solutions of the Maurer-Cartan equation are gauge
equivalent if they belong to the same orbit of the gauge action
\[(\exp(L^0\otimes\mathfrak{m}_A)\times
\exp(dM^{-1}\otimes\mathfrak{m}_A))
\times \MC_{\chi}(A)\mapor{\ast}\MC_{\chi}(A)\]%
given by the formula
\[ (e^l, e^{dm})\ast (x,e^a)=(e^l\ast x,
e^{dm}e^ae^{-\chi(l)})=(e^l\ast x,
e^{dm\bullet a\bullet (-\chi(l))}).\]%
The $\bullet$ in the rightmost term in the above formula is the
Baker-Campbell-Hausdorff multiplication; namely
$e^xe^y=e^{x\bullet y}$. Note that if $L=0$ and the differential
on $M$ is trivial, then
\[
\Def_\chi(A)=\exp(M^0\otimes
\mathfrak{m}_A).
\]
\par
It has been shown in \cite{cone} that the suspended cone of
$\chi$, i.e., the differential complex $(C_\chi,\mu_1)$ given by
the graded vector space
\[ C_\chi=\mathop{\oplus}_iC_\chi^i,\qquad C_\chi^i=
L^i\oplus M^{i-1},\]%
endowed with the differential
\[ \mu_1(l,m)=(dl,\chi(l)-dm),\qquad l\in L,m\in M,\]%
carries a natural compatible $L_\infty$-algebra structure, which
we shall denote $\widetilde{C}(\chi)$, such that the associated
deformation functor $\Def_{\widetilde{C}(\chi)}$ is naturally
isomorphic to $\Def_{\chi}$. More precisely, the map $(l,m)\mapsto
(l,e^m)$ induces a natural isomorphism
$\MC_{\widetilde{C}(\chi)}\xrightarrow{\sim}\MC_{\chi}$, and
homotopy eqivalence on $\MC_{\widetilde{C}(\chi)}$ is identified
with gauge equivalence on $\MC_{\chi}$.
\par
The higher brackets
\[\mu_n\colon\bigwedge^nC_{\chi}\to
C_{\chi}[2-n],\qquad n\ge 2,\]
 defining the $L_\infty$-algebra structure $\widetilde{C}(\chi)$ have been
explicitly described in \cite{cone}. Namely, one has
\[ \mu_2((l_1,m_1)\wedge (l_2,m_2))=
\left([l_1,l_2],\frac{1}{2}[m_1,\chi(l_2)]+\frac{(-1)^{\deg(l_1)}}{2}[\chi(l_1),m_2]\right)\]
and for $n\ge 3$
\[
\mu_n((l_1,m_1)\wedge\cdots\wedge(l_n,m_n))=\pm\frac{B_{n-1}}{(n-1)!}\sum_{\sigma\in
S_n}\varepsilon(\sigma)
[m_{\sigma(1)},[\cdots,[m_{\sigma(n-1)},\chi(l_{\sigma(n)})]\cdots]]
\]
Here the $B_n$'s are the Bernoulli numbers, $\varepsilon$ is
the Koszul sign and we refer to
\cite{cone} for the exact determination of the overall
$\pm$ sign in the above formulas (it will not be needed in
the present paper). Note that the projection on the first
factor
$\pi_1\colon
\widetilde{C}(\chi)\to L$ is a linear $L_\infty$-morphism.
\par
By the functoriality of $\chi\mapsto \widetilde{C}(\chi)$
if
\[
\xymatrix{
      L_1 \ar[r]^{f_L}
\ar[d]_{\chi_1} &
L_2 \ar[d]^{\chi_2}\\
    M_1  \ar[r]^{f_M} &
M_2\\
    }
\]
is a a commutative diagram of morphisms of differential graded
Lie algebras, then $(x,e^a)\mapsto(f_L(x),e^{f_M(a)})$ is a natural
 transformation of Maurer-Cartan functors
inducing a natural transformation
\[
\Def_{\chi_1}\to \Def_{\chi_2}.
\]
Moreover, if $f_L$ and $f_M$ are quasi-isomorphisms, then
$\Def_{\chi_1}\xrightarrow{\sim} \Def_{\chi_2}$ is an isomorphism.

\section{An example from K\"ahler
geometry}\label{sec.example-kaehler}

Let $X$ be a compact K\"ahler manifold. Consider the DGLA
$\Hom^*(A_X,A_X)$ of graded endomorphisms of the De Rham complex
and their subDGLAs
\[ L=\{f\in \Hom^*(A_X,A_X)\mid f(\ker\partial)\subseteq
\partial A_X\},\]
\[ M=\{f\in \Hom^*(A_X,A_X)\mid f(\ker\partial)\subseteq
\ker\partial\text{ and } f(\partial A_X)\subseteq \partial
A_X\}.\]

Then we have a commutative diagram of morphisms of DGLAs,
where the vertical arrows are the inclusions
\[
\xymatrix{0\ar[d]_{\rho}&
      L \ar@{=}[r]
\ar[d]_{\eta} \ar[l]&
L \ar[d]^{\chi}\\
\Hom^*\left(\dfrac{\ker\de}{\de A_X},\dfrac{\ker\de}{\de
A_X}\right) & M \ar[r]\ar[l]&
\Hom^*(A_X,A_X)\\
    }
\]
By the $\de\debar$-lemma, we have quasi-isomorphisms
\[ (A_X,d)\longleftarrow (\ker\de, d)\longrightarrow
\left(\frac{\ker\de}{\de A_X},0\right)\cong (H^*(X,\mathbb{C}),0).\]
Hence the
horizontal arrows in the above commutative diagram are
quasi-isomorphisms and we get the following isomorphisms of deformation functors
\[
\Def_\chi\simeq\Def_\eta\simeq
\Def_\rho=\Aut_{H^*(X,\mathbb{C})}.
\]
The isomorphism
$\psi\colon\Def_\chi\simeq\Aut_{H^*(X,\mathbb{C})}$ is
explicitly described as follows: given a Maurer-Cartan
element $(\alpha,e^a)\in \MC_\chi$ and a cohomology class
$[\omega]\in H^*(X,\mathbb{C})$,
\[
\psi_a([\omega])=[e^{\tilde{a}}\omega_0-\de \beta_0],
\]
where $(\tilde{\alpha},e^{\tilde{a}})\in \MC_\eta\subseteq
\MC_\chi$ is gauge-equivalent to $(\alpha,e^a)$, the differential
form $\omega_0$ is a $\de$-closed representative for the
cohomology class $[\omega]$, and $\beta_0\in A_X$ is such that
$d(e^{\tilde{a}}\omega_0-\de \beta_0)=0$. The cohomology class
$[e^{\tilde{a}}\omega_0-\de \beta_0]$ is independent of the
choices of $(\tilde{\alpha},e^{\tilde{a}})$, $\omega_0$ and
$\beta_0$. Since $e^{\tilde{a}}$ is an automorphism of $\de A_X$,
we can write
\[
\psi_a([\omega])=[e^{\tilde{a}}(\omega_0-\de \beta_0)]
\]
for any $(\tilde{\alpha},e^{\tilde{a}})$ and $\omega_0$ as
above and any $\beta\in A_X$ such that
$de^{\tilde{a}}(\omega_0-\de \beta)=0$.

\begin{remark}\label{rem.differential-psi}
Since $\chi\colon L\hookrightarrow\Hom^{*}(A_X,A_X)$ is
injective, the projection on the second factor $C_\chi\to
\Hom^{*-1}(A_X,A_X)$ induces an identification
$H^*(C_\chi)\simeq H^{*-1}({\rm coker\, \chi})$ and so
in particular
\[
H^1(C_\chi)\xrightarrow{\sim}
H^{0}(\Hom^*(\ker\de,A_X/\de
A_X))=\Hom^0(H^*(\ker\de),H^{*}(A_X/\de
A_X)).
\]
Hence, the differential of $\psi$
is naturally identified with the linear isomorphism
\[
\Hom^0(H^*(\ker\de),H^{*}(A_X/\de
A_X))\simeq \Hom^0(H^*(X,\mathbb{C});H^*(X,\mathbb{C}))
\]
induced by the $\de\debar$-lemma and the De Rham isomorphism.
\end{remark}

For later use, we  give a  more explicit description of the map
$\psi$ by writing a map $\widetilde\psi\colon\MC_\chi\to
\Aut_{H^*(X,\mathbb{C})}$ inducing it. To  define the map
$\widetilde\psi$ we need a few preliminary remarks.

\begin{lemma}\label{lem.newdifferential}
If $(\alpha,e^a)\in \MC_\chi$, then in the associative
algebra
$\Hom^*(A_X,A_X)$ we have the equality
\[ e^{-a} d e^{a}=d+\alpha.\]
\end{lemma}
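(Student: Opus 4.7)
The plan is to reduce the identity to the Maurer--Cartan condition $e^a\ast\chi(\alpha)=0$ by interpreting the gauge action as conjugation in the associative algebra $\End(A_X)$, inside which $\Hom^*(A_X,A_X)$ sits and in which the De~Rham differential $d$ is an actual element.

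First, I would recall that on $\Hom^*(A_X,A_X)$ the DGLA differential is $D(f)=[d,f]$ (graded commutator with $d$), and that for $a\in\Hom^0(A_X,A_X)$ (even degree) the graded and ungraded adjoints agree, so $(\ad a)(d)=ad-da=-Da$. Iterating and using the standard identity $e^aXe^{-a}=e^{\ad a}(X)$ inside $\End(A_X)$, I would expand
\[
e^ade^{-a}=\sum_{n\ge 0}\frac{(\ad a)^n(d)}{n!}=d-\sum_{n\ge 1}\frac{(\ad a)^{n-1}(Da)}{n!}=d-\frac{e^{\ad a}-1}{\ad a}(Da).
\]
Combined with $e^a\alpha e^{-a}=e^{\ad a}(\alpha)$, this gives
\[
e^a(d+\alpha)e^{-a}=d+\left(e^{\ad a}(\alpha)-\frac{e^{\ad a}-1}{\ad a}(Da)\right)=d+e^a\ast\alpha,
\]
where the last equality is exactly the gauge action formula for a DGLA recalled in Section~\ref{sec:dgla-morphism}.

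Since $\chi$ is the inclusion $L\hookrightarrow\Hom^*(A_X,A_X)$, we have $\chi(\alpha)=\alpha$, so the Maurer--Cartan condition $e^a\ast\chi(\alpha)=0$ reads $e^a(d+\alpha)e^{-a}=d$. Multiplying on the left by $e^{-a}$ and on the right by $e^a$ gives $d+\alpha=e^{-a}de^a$, which is the claim.

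The only delicate point is keeping the Koszul signs straight when passing between the graded commutator on $\Hom^*(A_X,A_X)$ and the ordinary commutator in $\End(A_X)$; this is harmless here because $a$ has even degree, so both conventions coincide on every bracket that appears. Everything else is a formal manipulation once the gauge action is identified with conjugation.
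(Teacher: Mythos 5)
Your proof is correct and follows essentially the same route as the paper: the paper likewise identifies the gauge action with conjugation via the formula $e^x\ast y=e^x(d+y)e^{-x}-d$ and then invokes the Maurer--Cartan condition, merely stating the conjugation formula rather than deriving it from the $\ad$-series as you do. The extra verification you supply (and the sign bookkeeping for $a$ of degree $0$) is a harmless elaboration of the same argument.
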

\begin{proof}
By the definition of the gauge action in the DGLA
$\Hom^*(A_X,A_X)$, one has for every
$x\in \Hom^0(A_X,A_X)$, $y\in \Hom^1(A_X,A_X)$ the formula
\[ e^x\ast y=e^x(d+y)e^{-x}-d.\]
In particular
$e^{-a} d e^{a}=
d+e^{-a}\ast 0$.
\end{proof}

\begin{corollary} If $(\alpha,e^a)\in \MC_\chi$, then the
graded subspaces
\[ e^{a}(\ker\de),\quad  e^{a}(\de A_X)\]
are subcomplexes of $(A_X,d)$.
Moreover, the map
\[ e^{a}\colon
\left(\frac{\ker\de}{\de A_X},0\right)\to
\left(\frac{e^{a}(\ker\de)}{e^{a}(\de
A_X)},0\right)\] is an isomorphism of complexes
and  the natural maps
\[ (A_X,d)\longleftarrow (e^{a}(\ker\de), d)\longrightarrow
\left(\frac{e^{a}(\ker\de)}{e^{a}(\de
A_X)},0\right)\] are quasiisomorphisms.
\end{corollary}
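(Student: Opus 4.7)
The plan is to leverage Lemma~\ref{lem.newdifferential}: the identity $d\circ e^a=e^a\circ(d+\alpha)$ makes $e^a$ an isomorphism of complexes $(V,d+\alpha)\xrightarrow{\sim}(e^a V,d)$ for every $(d+\alpha)$-stable graded subspace $V$. Via this device, all three assertions are converted into statements about $\ker\de$ and $\de A_X$ endowed with the perturbed differential $d+\alpha$; the remaining work is then to combine the definition of $L$ with the classical $\de\debar$-lemma.

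For the first claim I would check that $\ker\de$ and $\de A_X$ are stable under $d+\alpha$. The anti-commutation $\de d+d\de=0$ (a formal consequence of $d^2=0$ together with $d=\de+\debar$) gives $d(\ker\de)\subseteq\ker\de$ and $d(\de A_X)\subseteq\de A_X$. Stability under $\alpha$ is built into the definition of $L$: $\alpha(\ker\de)\subseteq\de A_X\subseteq\ker\de$, and since $\de A_X\subseteq\ker\de$ this also yields $\alpha(\de A_X)\subseteq\de A_X$. Lemma~\ref{lem.newdifferential} then translates these to the corresponding $d$-stability of $e^a(\ker\de)$ and $e^a(\de A_X)$.

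The second assertion reduces to showing that the induced differential on $\ker\de/\de A_X$ is zero, i.e.\ that $(d+\alpha)(\ker\de)\subseteq\de A_X$. The summand $\alpha(\ker\de)\subseteq\de A_X$ is immediate. For the summand $d(\ker\de)\subseteq\de A_X$, observe that for $\omega\in\ker\de$ one has $d\omega=\debar\omega$, which is both $d$-closed and $\debar$-exact, hence $\de\debar$-exact by the $\de\debar$-lemma, hence in $\de A_X$. Conjugating by $e^a$ gives the claimed isomorphism of zero-differential complexes.

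For the third assertion I would use Lemma~\ref{lem.newdifferential} once more to reduce the problem to showing that
\[(A_X,d+\alpha)\longleftarrow(\ker\de,d+\alpha)\longrightarrow\left(\ker\de/\de A_X,0\right)\]
are quasi-isomorphisms, where everything is implicitly tensored with $A$. At $\alpha=0$ this is exactly the chain of $\de\debar$-lemma quasi-isomorphisms already used in Section~\ref{sec.example-kaehler}. The main (though routine) obstacle is propagating these from $\alpha=0$ to the perturbed differential, and here the Artinian hypothesis enters essentially: the $\mathfrak{m}_A$-adic filtration on $A$ is finite, and since $\alpha\in L^1\otimes\mathfrak{m}_A$ the operator $\alpha$ acts as zero on the associated graded. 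The graded maps therefore reduce to finitely many copies of the classical quasi-isomorphisms, and an induction along small extensions $0\to J\to A'\to A\to 0$, applying the five-lemma to the cohomology long exact sequences, upgrades this to the full perturbed statement.
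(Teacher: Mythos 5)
Your proof is correct and follows essentially the same route as the paper: Lemma~\ref{lem.newdifferential} to transfer everything to the perturbed differential $d+\alpha$, the definition of $L$ plus the $\de\debar$-lemma for the stability and trivial-differential claims, and invariance of quasi-isomorphisms under infinitesimal perturbation for the last assertion. Your $\mathfrak{m}_A$-adic filtration and small-extension induction is just the spelled-out version of the paper's one-line appeal to the fact that an infinitesimal perturbation of an acyclic complex is still acyclic.
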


\begin{proof}
Both $(\de A_X,d)$ and $(\ker\de,d)$ are subcomplexes of
$(A_X,d)$. Since $d e^{a}(v)=e^{a}(dv+\alpha(v))$, with
$\alpha(\ker\de)\subseteq\de A_X$, we have $de^{a}(\de
A_X)\subseteq e^a(\de A_X)$ and $de^{a}(\ker\de)\subseteq e^a(
\ker\de)$. The induced differential on the quotient space
$\frac{e^{a}(\ker\de)}{e^{a}(\de A_X)}$ is trivial since, by the
$\de\debar$-lemma, $d(\ker\partial)\subseteq\de A_X$. 
Again by $\de\debar$-lemma the complex $(\de A_X,d)$ is acyclic and therefore
the morphisms of complexes
\[ (A_X,d)\longleftarrow (\ker\de, d)\longrightarrow
\left(\frac{\ker\de}{\de A_X},0\right)\] are quasiisomorphisms.
Since every 
infinitesimal perturbations of an acyclic complex is still acyclic, the complex
$(e^a(\de A_X),d)$ is acyclic and therefore 
the morphisms of complexes
\[ (A_X,d)\longleftarrow (e^a(\ker\de), d)\longrightarrow
\left(\frac{e^a(\ker\de)}{e^a(\de A_X)},0\right)\] are quasiisomorphisms.
\end{proof}

\begin{definition} The isomorphism
\[ \widetilde{\psi}_a\colon H^*(X,\mathbb{C})\to
H^*(X,\mathbb{C})\] associated to a Maurer-Cartan element
$(\alpha,e^a)$ via the natural map $\MC_\chi\to
\Def_\rho\simeq
\Aut^0(H^*(X;{\mathbb C})$ is obtained by the De Rham
isomorphism $H^*(X,\mathbb{C})=H^*(A_X,d)$ and the chain of
quasi-isomorphisms
\[\begin{array}{ccccc}
A_X&\longleftarrow&\ker\de&\longrightarrow&\dfrac{\ker\de}{\de A_X}\\
&&&&\mapver{e^{a}}\\
A_X&\longleftarrow&e^{a}(\ker\de)&\longrightarrow&
\dfrac{e^{a}(\ker\de)}{e^{a}(\de
A_X)}\end{array}\]
\end{definition}

More explicitly,
\[
\widetilde{\psi}_a([\omega])=[e^a(\omega_0-\de
\beta)]
\]
for any $\de$-closed
representative $\omega_0$ of the cohomology class
$[\omega]$, and any $\beta\in A_X$ such that
$d e^{a}(\omega_0-\de \beta)=0$.

\begin{proposition} The natural transformation $\widetilde{\psi}\colon
\MC_\chi\to \Aut_{H^*(X,\mathbb{C})}$ is gauge invariant
and therefore factors to $\psi\colon \Def_\chi\to
\Aut_{H^*(X,\mathbb{C})}$.
\end{proposition}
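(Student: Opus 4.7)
The plan is to exploit the factorization $(e^l,e^{dm})=(e^l,1)\cdot(1,e^{dm})$ of a generic gauge transformation and verify the invariance of $\widetilde{\psi}$ under the two types of generators $(e^l,1)$ with $l\in L^0\otimes\mathfrak{m}_A$ and $(1,e^{dm})$ with $m\in M^{-1}\otimes\mathfrak{m}_A$ separately.

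For the gauge $(1,e^{dm})$, the new MC pair satisfies $e^{a'}=e^{dm}e^a$, where $dm$ is the graded commutator $[d,m]$ in $\Hom^*(A_X,A_X)$. The identity $[d,[d,m]]=0$ forces $d$ to commute with $e^{[d,m]}=e^{dm}$, so $e^{dm}$ is a $d$-chain map, and the very same $\beta$ that works for $a$ also gives $de^{a'}(\omega_0-\de\beta)=0$. It then suffices to show that $e^{dm}$ acts as the identity on $H^*(A_X,d)$: for any $d$-closed $\eta$ the relation $[d,m]\eta=d(m\eta)$, combined with the fact that each $[d,m]^{k-1}\eta$ is again $d$-closed, yields $[d,m]^k\eta=d\bigl(m[d,m]^{k-1}\eta\bigr)$ by induction, so $(e^{dm}-1)\eta$ is $d$-exact term by term.

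For the gauge $(e^l,1)$, one has $e^{a'}=e^ae^{-l}$. The key structural observation is that $l(\ker\de)\subseteq\de A_X\subseteq\ker\de$, so $l$ (hence $e^{\pm l}$) preserves both $\ker\de$ and $\de A_X$. Starting from $\beta$ with $de^a(\omega_0-\de\beta)=0$, I would construct $\beta'$ satisfying $e^l(\omega_0-\de\beta)=\omega_0-\de\beta'$ by writing $(e^l-1)\omega_0=\sum_{k\ge 1}l^k\omega_0/k!\in\de A_X$ and $e^l\de\beta\in\de A_X$ as $\de$-exact forms $\de\delta_1$ and $\de\delta_2$, and then taking $\beta'=\delta_2-\delta_1$. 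A direct computation gives
\[
e^{a'}(\omega_0-\de\beta')=e^ae^{-l}\cdot e^l(\omega_0-\de\beta)=e^a(\omega_0-\de\beta),
\]
whose cohomology class equals $\widetilde{\psi}_a([\omega])$ by hypothesis, so $\widetilde{\psi}_{a'}([\omega])=\widetilde{\psi}_a([\omega])$.

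The main technical obstacle lies in the $(e^l,1)$ step, specifically the bookkeeping needed to extract the explicit $\de$-primitive $\beta'$ from the $e^l$-invariance of $\ker\de$ and $\de A_X$; it is here that the defining condition of $L$ is crucially used. The $(1,e^{dm})$ case is essentially formal once one recognizes $e^{dm}$ as the exponential of the inner differential $[d,m]$, which necessarily acts trivially on De Rham cohomology.
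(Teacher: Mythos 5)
Your proof is correct and follows essentially the same route as the paper: both split the gauge action into the factors $(e^l,1)$ and $(1,e^{dm})$, use $l(\ker\de)\subseteq\de A_X$ to see that $e^{\pm l}$ preserves $\ker\de$ and $\de A_X$ and acts trivially modulo $\de A_X$, and use that $dm=[d,m]$ is null-homotopic so that $e^{dm}$ acts as the identity on $H^*(A_X,d)$. The only difference is presentational: you verify everything on the explicit representatives $e^a(\omega_0-\de\beta)$, whereas the paper phrases the same two facts as commutative diagrams of quasi-isomorphisms.
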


\begin{proof}To show that
$\widetilde{\psi}_{a\bullet(-l)}=\widetilde{\psi}_a$, we note
that, since
$l(\ker\de)\subseteq \de A_X$, we have
$e^{-l}(\ker \de)=\ker \de$, $e^{-l}(\de A_X)=\de A_X$, and
\[ e^{-l}\colon \left(\frac{\ker\de}{\de A_X},0\right)\to
\left(\frac{\ker\de}{\de A_X},0\right)
\]
is the identity. To prove that $\widetilde{\psi}_{dm\bullet
a}=\widetilde{\psi}_a$, notice that we have a commutative
diagram of morphism of complexes
\[\begin{array}{ccccc}
A_X&\longleftarrow&\ker\de&\longrightarrow&\dfrac{\ker\de}{\de A_X}\\
\mapver{e^{dm}}&&\mapver{e^{dm}}&&\mapver{e^{dm}}\\
A_X&\longleftarrow&e^{dm}(\ker\de)&\longrightarrow&
\dfrac{e^{dm}(\ker\de)}{e^{dm}(\de A_X)}\end{array}\]
which, since
$dm$ is homotopy equivalent to zero, induces the commutative
diagram of isomorphisms
\[\begin{array}{ccc}
H^*(X;{\mathbb C})&\longleftrightarrow&\dfrac{\ker\de}{\de A_X}\\
\mapver{\rm Id}&&\mapver{e^{dm}}\\
H^*(X;{\mathbb C})&\longleftrightarrow&
\dfrac{e^{dm}(\ker\de)}{e^{dm}(\de
A_X)}\end{array}\]
Finally, gauge invariance of $\widetilde{\psi}$, together
with the explicit formulae for $\widetilde{\psi}([\omega])$
and $\psi([\omega])$ written above immediately imply that
$\widetilde{\psi}$ induces $\psi$.
\end{proof}

\section{Morphisms of deformation functors associated to Cartan homotopies}
\label{sec.cartan}

In this section we formalize, under the notion of \emph{Cartan
homotopy}, a set of standard identities that often arise in
algebra and geometry \cite[Appendix B]{clemens}, and show how to
any Cartan homotopy can be canonically associated a natural
transfrormation of deformation functors.

Let $L$ and $M$ be two differential graded Lie algebras. For a
given linear map $\bi\in \Hom^{-1}(L,M)$, let $\bl\colon L\to M$
be the map defined as
\[a\mapsto
\bl_a=d\bi_a+\bi_{da}.
\]
\begin{definition}\label{def:cartan}
The map $\bi$ is called a \emph{Cartan homotopy} for $\bl$ if, for
every $a,b\in L$, we have:
\[\bi_{[a,b]}=[\bi_a,\bl_b],\qquad [\bi_a,\bi_{b}]=0.\]
When the second equation is replaced by the weaker condition
\[
\sum_{\sigma\in
S_3}\pm[\bi_{x_{\sigma(1)}},
[\bi_{x_{\sigma(2)}},\bl_{x_{\sigma(3)}}]=0,
\]
where $\pm$ is the Koszul sign, we shall say that $\bi$ is a
\emph{weak Cartan homotopy}.
\end{definition}

It is straightforward  to show that the condition
$\bi_{[a,b]}=[\bi_a,\bl_b]$ implies that $\bl$ is a morphism of
differential graded Lie algebras.

\begin{example}\label{exa.realcartan}
The name Cartan homotopy has a clear origin in differential
geometry. Namely, let $M$ be a differential manifold, ${\mathcal
X}(M)$ be the Lie algebra of vector fields on $M$, and ${\mathcal
E}nd^*(\Omega^*(M))$ be the Lie algebra of endomorphisms of the de
Rham algebra of $M$. The Lie algebra ${\mathcal X}(M)$ can be seen
as a DGLA concentrated in degree zero, and the graded Lie algebra
${\mathcal E}nd^*(\Omega^*(M))$ has a degree one differential
given by $[d_{\rm dR},-]$, where $d_{\rm dR}$ is the de Rham
differential. Then the contraction
\[
\bi\colon {\mathcal X}(M)\to {\mathcal E}nd^*(\Omega^*(M))[-1]
\]
is a Cartan homotopy and its differential is the Lie derivative
\[
[d,\bi]={\mathcal L}\colon {\mathcal X}(M)\to {\mathcal
E}nd^*(\Omega^*(M)).
\]
In fact, by classical Cartan's homotopy formulas
\cite[Section 2.4]{AbrahamMarsden}, for any two vector fields $X$ and
$Y$ on $M$, we
have
\begin{enumerate}
\item ${\mathcal L}_X=d_{\rm dR}\bi_X+\bi_Xd_{\rm dR}=[d_{\rm dR},\bi_X]$;

\item $\bi_{[X,Y]}={\mathcal L}_X \bi_Y-\bi_Y{\mathcal L}_X=[{\mathcal
L}_X,\bi_Y]=[\bi_X,{\mathcal L}_Y]$;

\item $[\bi_X,\bi_Y]=0$.
\end{enumerate}
Note that the first Cartan formula above actually states that
$[d,\bi]={\mathcal L}$. Indeed  ${\mathcal X}(M)$ is
concentrated in  degree
zero and then its differential is trivial.
\end{example}

\begin{example}
Let $L$ be a DGLA and endowe  the complex $L[-1]$ with the trivial
bracket. Then the identity map
\[ \Id\in \Hom^{-1}(L[-1],L)\]
is a weak Cartan homotopy for the trivial map $0$.
In fact $d_{L[-1]}=-d_{L}$ and then $d_L(a)+d_{L[-1]}(a)=0$.
\end{example}

\begin{remark}\label{rem.cambiobasepercartan}
The composition of a (weak) Cartan homotopy with a morphism of DGLAs is a
(weak) Cartan homotopy.
If $\bi\colon L\to M[-1]$ is a Cartan
homotopy and
$\Omega$ is a differential graded-commutative algebra, then
its natural extension
\[\bi\otimes \operatorname{Id}\colon L\otimes \Omega\to (M\otimes
\Omega)[-1],\qquad
a\otimes \omega\mapsto \bi_a\otimes\omega,\]
is  a (weak) Cartan homotopy.
\end{remark}

\begin{remark}\label{rem.cartanhomotopyfor}
By definition, $\bl$ is the differential of $\bi$ in the complex
$\Hom^*(L,M)$ and so $\bi$ is a homotopy between $\bl$ and the
trivial map. Then the map $\bl\colon L\to M$ is a null-homotopic
morphism of DGLAs and
\[
\bi\colon L\to (\coker \bl) [-1],\qquad \bi\colon\ker\bl\to M[-1]
\]
are morphisms of differential graded vector spaces.
\end{remark}

\begin{proposition}\label{prop.cartan}  Let $\bl\colon L\to
M$ be a DGLA morphism, and let $\pi_1\colon
\widetilde{C}(\bl)\to L$ be the projection on the first
factor. A weak Cartan homotopy $\bi\colon L\to M[-1]$ for
$\bl$ is the datum of a linear $L_\infty$-morphisms $L\to
\widetilde{C}(\bl)$ lifting the identity of $L$. In
particular, if $\bi\colon L\to M[-1]$ is a weak Cartan
homotopy for $\bl$, then the map
$a\mapsto (a,e^{\bi_a})$ induces a natural transformation of
Maurer-Cartan functors
$\MC_L\to
\MC_{\bl}
$, and consequently a natural transformation of deformation
functors $\Def_L\to
\Def_{\bl}$.
\end{proposition}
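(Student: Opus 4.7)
The plan is to show that giving a linear $L_\infty$-morphism $F\colon L\to\widetilde{C}(\bl)$ with $\pi_1\circ F=\Id_L$ is equivalent to giving a weak Cartan homotopy $\bi\colon L\to M[-1]$ for $\bl$. Since $F$ is linear (all higher Taylor components vanish) and lifts $\Id_L$, it is uniquely determined by a degree $-1$ linear map $\bi\colon L\to M$ via $F(a)=(a,\bi_a)$. The proof then becomes a term-by-term translation of the $L_\infty$-morphism axioms for $F$ into the weak Cartan identities for $\bi$.

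I would carry out the translation arity by arity, using the explicit formulas for the brackets $\mu_n$ on $\widetilde{C}(\bl)$ recalled in Section~\ref{sec:dgla-morphism}. At arity $1$, the equation $\mu_1 F(a)=F(da)$ unfolds to $(da,\bl_a-d\bi_a)=(da,\bi_{da})$, i.e.\ the defining identity $\bl_a=d\bi_a+\bi_{da}$. At arity $2$, the equation $F([a,b])=\mu_2(F(a),F(b))$, after invoking graded skew-symmetry of the bracket in $M$ to combine the two summands in the formula for $\mu_2$, becomes $\bi_{[a,b]}=[\bi_a,\bl_b]$. At arity $3$, the equation $\mu_3(F(a),F(b),F(c))=0$ is, up to the nonzero scalar $B_2/2!$, exactly the weak Cartan relation $\sum_{\sigma\in S_3}\pm[\bi_{x_{\sigma(1)}},[\bi_{x_{\sigma(2)}},\bl_{x_{\sigma(3)}}]]=0$.

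The main obstacle is the arity-$n$ axioms for $n\ge 4$. For even $n$ these are automatic, since the prefactor $B_{n-1}$ vanishes. For odd $n\ge 5$ one must show that the totally symmetrized iterated bracket $\sum_{\sigma\in S_n}\varepsilon(\sigma)[\bi_{\sigma(1)},[\cdots,[\bi_{\sigma(n-1)},\bl_{\sigma(n)}]\cdots]]$ vanishes as a consequence of the weak Cartan axioms and the Jacobi identity in $M$. The argument is combinatorial: using Jacobi one reorganizes the innermost three slots in the iterated bracket, and the arity-$3$ weak Cartan relation, applied to those slots and then symmetrized over the remaining positions, reduces the length-$n$ sum to length-$(n-2)$ sums, which vanish by induction.

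Granted the equivalence, the assertion on Maurer-Cartan functors is formal. A linear $L_\infty$-morphism preserves Maurer-Cartan elements, so $x\mapsto F(x)=(x,\bi_x)$ defines a natural transformation $\MC_L\to\MC_{\widetilde{C}(\bl)}$. Composing with the natural isomorphism $\MC_{\widetilde{C}(\bl)}\xrightarrow{\sim}\MC_{\bl}$ of Section~\ref{sec:dgla-morphism}, which sends $(l,m)\mapsto(l,e^m)$, produces the stated map $a\mapsto(a,e^{\bi_a})$, and this descends to a natural transformation of deformation functors $\Def_L\to\Def_{\bl}$ by the compatibility of the identification $\MC_{\widetilde{C}(\bl)}\simeq\MC_{\bl}$ with gauge/homotopy equivalence.
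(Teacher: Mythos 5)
Your proposal follows the paper's own proof essentially verbatim: a linear lift of $\Id_L$ must be $a\mapsto(a,\bi_a)$, the arity-$1$, $2$, $3$ components of the $L_\infty$-morphism condition unfold to precisely the three weak Cartan identities, even arities $n\ge 4$ die because $B_{n-1}=0$, and the conclusion for $\MC_L\to\MC_{\bl}$ follows by composing with the isomorphism $(l,m)\mapsto(l,e^m)$. One simplification for odd $n\ge 5$: no Jacobi identity or induction is needed, since grouping the $S_n$-sum according to which three elements occupy the innermost slots exhibits, in each group, the full Koszul-signed $S_3$-symmetrization of $[\bi,[\bi,\bl]]$ sitting inside the outer brackets, so every group already vanishes by the arity-$3$ relation alone.
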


\begin{proof}
A linear map $\tilde{\bi} \colon L\to \widetilde{C}({\bl})$
lifting the identity of $L$ has the form
$ \tilde{\bi}(a)=(a,\bi_a)$, with $\bi\colon L\to M[1]$.
By the explicit expression
for the higher brackets
\[\mu_n\colon\bigwedge^nC_{\chi}\to
C_{\chi}[2-n],\qquad n\ge 2,\]
 defining the $L_\infty$-algebra structure
$\widetilde{C}(\chi)$,  it is straightforward to check that
$\tilde{\bi}$ is a morphism of complexes commuting
 with every bracket if and only if $\bi$ is a weak Cartan homotopy.
Indeed, $\tilde{\bi}(dx)=\mu_1(\tilde{\bi}(x))$ is  the identity
$\bi_{da}=(-d)\bi_a+\bl_a$; the identity
$\tilde{\bi}([x,y])=\mu_2(\tilde{\bi}(x)\wedge
\tilde{\bi}(y))$
is
\[\bi_{[a,b]}=\frac{1}{2}[\bi_a,\bl_b]+
\frac{(-1)^{\deg(a)}}{2}[\bl_a,\bi_b]=[\bi_a,\bl_b],\]
and
$\mu_n(\tilde{\bi}(x_1)\wedge\cdots\wedge
\tilde{\bi}(x_n))=0$, for any $x_1,x_2,\dots,x_n$ and any
$n\ge 3$, if and only if
\[
\sum_{\sigma\in
S_3}\pm[\bi_{x_{\sigma(1)}},
[\bi_{x_{\sigma(2)}},\bl_{x_{\sigma(3)}}]=0
\]
for any $x_1,x_2,x_3$.
\par

Since the $L_\infty$-morphism $\tilde{\bi}$ is linear, the map
$l\mapsto (l,{\bi}_l)$ is a morphism of Maurer-Cartan functors
$\MC_L\to \MC_{\widetilde{C}(\bl)}$. To conclude the proof,
compose this morphism with the isomorphism
$\MC_{\widetilde{C}(\bl)}\xrightarrow{\sim} \MC_{\bl}$ given by
$(l,m)\mapsto (l,e^m)$.
 \end{proof}

\begin{corollary}\label{cor.cartan}
Let $\bi\colon N\to M[-1]$ be a Cartan homotopy for
$\bl\colon N\to M$, let $L$ be a subDGLA of $M$ such
that $\bl(N)\subseteq L$, and let $\chi\colon
L\hookrightarrow M$ be the inclusion. Then the linear map
\[
\Phi\colon N\to \widetilde{C}(\chi),\qquad
\Phi(a)=(\bl_a,\bi_a)
\]
is a linear  $L_{\infty}$-morphism. In particular, the
map $a\mapsto (\bl_a,e^{\bi_a})$ induces a natural
transformation of Maurer-Cartan functors $\MC_N\to \MC_{\chi}
$, and consequently a natural transformation of deformation
functors $\Def_N\to
\Def_{\chi}$.
\end{corollary}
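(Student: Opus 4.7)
My plan is to derive the corollary by composing two linear $L_\infty$-morphisms rather than verifying the axioms for $\Phi$ from scratch. First, since $\bi$ is a (strong) Cartan homotopy for $\bl$, the derived map $\bl\colon N\to M$ is automatically a morphism of DGLAs, and $\bi$ is in particular also a weak Cartan homotopy for this morphism. Applying Proposition \ref{prop.cartan} to $\bl\colon N\to M$ therefore produces a linear $L_\infty$-morphism
\[
\widetilde{\bi}\colon N\longrightarrow \widetilde{C}(\bl\colon N\to M),\qquad a\mapsto (a,\bi_a).
\]

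Next, the hypothesis $\bl(N)\subseteq L$ means that $\bl$ factors through $\chi$, giving a commutative square of DGLA morphisms
\[
\xymatrix{N \ar[r]^{\bl} \ar[d]_{\bl} & L \ar[d]^{\chi} \\ M \ar@{=}[r] & M}
\]
whose functoriality, recalled at the end of Section \ref{sec:dgla-morphism}, I would upgrade from a natural transformation of Maurer-Cartan functors to a strict (and in particular linear) $L_\infty$-morphism
\[
\widetilde{C}(\bl\colon N\to M)\longrightarrow \widetilde{C}(\chi),\qquad (n,m)\mapsto (\bl(n),m).
\]
Compatibility with the brackets $\mu_n$ is immediate from the explicit formulae of Section \ref{sec:dgla-morphism}: for $n\ge 3$ the bracket $\mu_n$ depends only on the second component of the cone and on $\chi$ (respectively $\bl$) applied to the first, so the substitution $n\mapsto \bl(n)$ identifies the two; for $n=2$ the identification reduces to $\bl\colon N\to L$ being a DGLA morphism. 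Composing this with $\widetilde{\bi}$ yields the required linear $L_\infty$-morphism $\Phi\colon N\to \widetilde{C}(\chi)$, $a\mapsto (\bl_a,\bi_a)$.

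The statements about Maurer-Cartan and deformation functors then follow exactly as in the last paragraph of the proof of Proposition \ref{prop.cartan}: linearity of $\Phi$ makes $a\mapsto (\bl_a,\bi_a)$ a natural transformation $\MC_N\to \MC_{\widetilde{C}(\chi)}$, postcomposition with the natural isomorphism $\MC_{\widetilde{C}(\chi)}\xrightarrow{\sim}\MC_\chi$ given by $(l,m)\mapsto (l,e^m)$ produces the asserted formula $a\mapsto (\bl_a,e^{\bi_a})$, and the result descends to deformation functors by $L_\infty$-homotopy invariance. The only step requiring any genuine checking is the upgrade of the functoriality of $\widetilde{C}$ to $L_\infty$-structures, and this is essentially forced by the shape of the higher brackets. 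As a sanity check one may prefer to bypass functoriality and verify the linear $L_\infty$-morphism axioms for $\Phi$ directly: the $\mu_1$-axiom is the defining identity $\bi_{da}=\bl_a-d\bi_a$; the $\mu_2$-axiom amounts to $\bi_{[a,b]}=[\bi_a,\bl_b]$ after matching signs via graded antisymmetry of the bracket; and the vanishing $\mu_n(\Phi(a_1)\wedge\cdots\wedge\Phi(a_n))=0$ for $n\ge 3$ follows from $[\bi_a,\bi_b]=0$ by repeated application of the Jacobi identity, since each nested bracket $[\bi_{a_{\sigma(1)}},[\cdots,[\bi_{a_{\sigma(n-1)}},\bl_{a_{\sigma(n)}}]\cdots]]$ can be rewritten using $[\bi_{a_{n-1}},\bl_{a_n}]=\bi_{[a_{n-1},a_n]}$ as an iterated bracket purely of $\bi$'s.
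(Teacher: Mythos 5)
Your proposal is correct and follows essentially the same route as the paper: factor $\Phi$ as the $L_\infty$-morphism $\tilde{\bi}\colon N\to\widetilde{C}(\bl)$ from Proposition~\ref{prop.cartan} composed with the $L_\infty$-morphism $\widetilde{C}(\bl)\to\widetilde{C}(\chi)$ induced by the commutative square coming from $\bl(N)\subseteq L$. Your extra remarks --- the explicit check that functoriality of $\chi\mapsto\widetilde{C}(\chi)$ holds at the level of the brackets $\mu_n$, and the direct verification that $[\bi_a,\bi_b]=0$ kills all terms of $\mu_n$ for $n\ge 3$ --- only make explicit what the paper leaves implicit.
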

\begin{proof}
We have a commutative diagram
of differential graded Lie algebras
\[
\xymatrix{
      N \ar[r]^{\bl}
\ar[d]_{\bl} &
L \ar[d]^{\chi}\\
    M  \ar@{=}[r] &
M\\
    }
\]
inducing an $L_\infty$-morhism $\widetilde{C}(\bl)\to
\widetilde{C}(\chi)$. Composing this morphism with the
$L_\infty$-morphism $\tilde{\bi}\colon
N\to\widetilde{C}(\bl)$ given by
Proposition~\ref{prop.cartan}, one gets the
$L_\infty$-morphism $\Phi$.
\end{proof}

\section{Polyvector fields and generalized periods}

The notion of Cartan homotopy generalizes immediately to sheaves
of DGLAs. In this section we give another example of Cartan
homotopy which we will use later.

Let $X$ be a complex manifold and  denote by:\begin{itemize}

\item $T_{X,\mathbb{C}}=T^{1,0}_X\oplus T^{0,1}_X$ the complexified
differential tangent
bundle.

\item $T_X\simeq T^{1,0}_X$ the holomorphic tangent bundle.

\item $\mathcal{A}_X^{p,q}$ the sheaf of differentiable $(p,q)$-forms
and by $\mathcal{A}_X^{p,q}(E)$ the sheaf of $(p,q)$-forms
with values in a holomorphic vector bundle $E$.

\item ${A}_X^{p,q}$  and ${A}_X^{p,q}(E)$ the vector spaces of
global sections
of $\mathcal{A}_X^{p,q}$ and $\mathcal{A}_X^{p,q}(E)$ respectively.

\end{itemize}

The direct sum
\[\mathcal{A}_X=\mathop{\oplus}_i\mathcal{A}^{i}_X,\quad\text{ where }\quad
\mathcal{A}^{i}_X=\mathop{\oplus}_{p+q=i}\mathcal{A}^{p,q}_X,\]
endowed with the wedge product $\wedge$, is a sheaf of graded
algebras; we denote by
${\mathcal H}om^{a,b}(\mathcal{A}_X,\mathcal{A}_X)$ the
sheaf of its
$\mathbb{C}$-linear endomorphisms of ${\mathcal A}_X$ of
bidegree
$(a,b)$. Notice that $\de$ and $\debar$ are global sections
of
${\mathcal H}om^{1,0}(\mathcal{A}_X,{\mathcal A}_X)$ and
${\mathcal H}om^{0,1}(\mathcal{A}_X,{\mathcal A}_X)$
respectively.  The direct sum
${\mathcal
H}om^{*}(\mathcal{A}_X,{\mathcal
A}_X)=\oplus_k\oplus_{a+b=k}{\mathcal
H}om^{a,b}(\mathcal{A}_X,{\mathcal A}_X)$ is a sheaf of
graded associative algebras, and so a sheaf of
differential graded Lie algebras with the
natural bracket
\[ [f,g]=fg-(-1)^{\deg(f)\deg(g)}gf\]
and  differential $[d,-]=[\de+\debar,-]$.\par

For any integer $(a,b)$ with $a\leq 0$ and $b\geq 0$,
let ${\mathcal G}erst^{a,b}_X$ be the sheaf
\[
{\mathcal G}erst^{a,b}_X={\mathcal
A}_X^{0,b}(\wedge^{-a}T_X).
\]
The direct sum ${\mathcal
G}erst^{*}_X=\oplus_k\oplus_{a+b=k}{\mathcal G}erst^{a,b}_X$
is a sheaf of differential Gerstenhaber algebras, with  the
wedge product
\[
\wedge\colon {\mathcal G}erst^{a_1,b_1}_X\otimes {\mathcal
G}erst^{a_2,b_2}_X\to{\mathcal G}erst^{a_1+a_2,b_1+b_2}_X
\]
as graded commutative product, 
the degree 1 differential
\[
\debar\colon {\mathcal G}erst^{a,b}_X\to{\mathcal
G}erst^{a,b+1}_X
\]
defined in local coordinates by the formula
\[ \debar\left(\phi\frac{\de~}{\de
z_I}\right)=\debar(\phi)\frac{\de~}{\de z_I},\qquad \phi\in
\mathcal{A}_X^{0,*},\] and the
degree 1 bracket
\[
[\;,\;]_{\rm G}\colon {\mathcal G}erst^{a_1,b_1}_X\otimes {\mathcal
G}erst^{a_2,b_2}_X\to{\mathcal G}erst^{a_1+a_2+1,b_1+b_2}_X
\]
defined in local coordinates by the formula
\[\left[fd\bar{z}_{I}\desude{~}{z_{H}},
gd\bar{z}_{J}\desude{~}{z_{K}}\right]_{\rm G}=
d\bar{z}_{I}\wedge
d\bar{z}_{J}\left[f\desude{~}{z_{H}},
g\desude{~}{z_{K}}\right]_{\rm SN}.\]
Here $[\,,\,]_{\rm SN}$ denotes the Schouten-Nijenhuis
 bracket on ${\mathcal A}_X^0(\wedge^*T_X)$, i.e., the odd
graded Lie bracket obtained extending the usual Lie bracket
on ${\mathcal A}_X^0(T_X)$ by imposing
\[
[\xi,f]_{\rm SN}=\xi(f), \qquad \xi\in {\mathcal
A}^0(T_X),\quad f\in {\mathcal A}_X^0;
\]
and the odd graded Poisson identity
\[
[\xi,\eta_1\wedge\eta_2]_{\rm SN}=
[\xi,\eta_1]_{\rm SN}\wedge \eta_2+(-1)^{\deg(\xi)(\deg(\eta_1)-1)}\eta_1
\wedge [\xi,\eta_2]_{\rm SN}.
\]

The contraction of differential forms with vector fields is used to
define an injective morphisms of sheaves of bigraded vector
spaces: the \emph{contraction map}
\[ \bi\colon {\mathcal G}erst^{a,b}_X\to
{\mathcal H}om^{a,b}(\mathcal{A}_X,{\mathcal
A}_X),\qquad
\xi\mapsto \bi_{\xi},\quad \bi_{\xi}(\omega)=\xi\contr \omega,\]%
The contraction map is actually a morphism of sheaves of
bigraded associative algebras:
\[
\bi_{\xi\wedge\eta}=\bi_\xi\bi_\eta
\]
In particular, since $({\mathcal G}erst^{*}_X,\wedge)$ is a
graded commutative algebra, we obtain
\[
[\bi_\xi,\bi_\eta]=0, \qquad \forall \xi,\eta\in {\mathcal
G}erst^{*}_X.
\]
Note that iterated contractions give a symmetric map
\begin{align*}
\bi^{(n)}\colon\bigodot^n{\mathcal G}erst^{*}_X&\to {\mathcal
H}om^{*}(\mathcal{A}_X,{\mathcal A}_X)\\
\xi_1\odot\xi_2\odot\cdots\odot\xi_n&
\mapsto\bi_{\xi_1}\bi_{\xi_2}\cdots\bi_{\xi_n}
\end{align*}
 Since ${\mathcal G}erst^{*}_X$
is a sheaf of differential graded Gerstenhaber algebras, its
desuspension
\[
{\mathcal P}oly^{*}_X={\mathcal G}erst[-1]_X^*
\]
is a sheaf of differential graded Lie algebras. Note that, due
to the shift, the differential $D$ in ${\mathcal
P}oly^{*}_X$ is $-\debar$, i.e., in local coordinates
\[
D\colon {\mathcal P}oly^{k}_X\to{\mathcal P}oly^{k+1}_X
\]
is given by the formula
\[ D\left(\phi\frac{\de~}{\de z_I}\right)=-\debar(\phi)\frac{\de~}{\de
z_I},\qquad \phi\in \mathcal{A}_X^{0,*},\]

The contraction map $\bi\colon {\mathcal G}erst^{*}_X\to {\mathcal
H}om^{*}(\mathcal{A}_X,{\mathcal A}_X)$ can be seen as a linear
map
\[
\bi\colon{\mathcal P}oly^{*}_X\to {\mathcal H}om^{*}
(\mathcal{A}_X,{\mathcal A}_X)[-1].
\]
More in general, via the decalage isomorphism, the iterated
contraction is a graded antisymmetric map
\[
\bi^{(n)}\colon\bigwedge^n{\mathcal P}oly^{*}_X\to
{\mathcal H}om^{*}(\mathcal{A}_X,{\mathcal A}_X)[-n].
\]

\begin{lemma}\label{lem.cartanequalities}
In the notation above,
for every $\xi,\eta\in {\mathcal P}oly^{*}_X$  we have
\[\bi_{D\xi}=-[\debar, \bi_\xi],\qquad
\bi_{[\xi,\eta]}=[\bi_\xi,[\de,\bi_\eta]],\qquad
[\bi_\xi,\bi_\eta]=0.\]
\end{lemma}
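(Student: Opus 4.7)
The plan is to verify the three identities locally, reducing to holomorphic coordinate charts; all three statements are $\mathbb{C}$-linear and compatible with restriction to open sets, so working in a chart is harmless.

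The third identity is essentially formal. The contraction map $\bi\colon \mathcal{G}erst^*_X\to \mathcal{H}om^*(\mathcal{A}_X,\mathcal{A}_X)$ is a morphism of graded associative algebras: $\bi_{\xi\wedge\eta}=\bi_\xi\bi_\eta$. The $\mathcal{H}om$-degree of $\bi_\xi$ coincides with the $\mathcal{G}erst$-degree of $\xi$, so the graded commutator in $\mathcal{H}om^*$ corresponds to the graded commutator of the wedge product in $\mathcal{G}erst^*$, which vanishes by graded commutativity of $\wedge$. Passing to the desuspension $\mathcal{P}oly^*_X$ does not alter the $\mathcal{H}om$-degree of $\bi_\xi$ (only the source grading shifts), so $[\bi_\xi,\bi_\eta]=0$ as elements of $\mathcal{H}om^*$.

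For the first identity I write $\xi=\phi\,\partial/\partial z_I$ locally with $\phi\in\mathcal{A}_X^{0,*}$ and multi-index $I$. Since $\partial/\partial z_I$ is a holomorphic multivector, $\bi_{\partial/\partial z_I}$ commutes with $\debar$ strictly, so the Koszul-graded commutator $[\debar,\bi_\xi]$ reduces to the action of $\debar$ on the prefactor $\phi$ via the Leibniz rule. Combining this with the definition $D=-\debar$ on $\mathcal{P}oly^*_X$ and the definition $\bi_{D\xi}=-\bi_{\debar\phi\,\partial/\partial z_I}$ yields $\bi_{D\xi}=-[\debar,\bi_\xi]$ after a short sign check.

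The second identity is the substantive one; it is the Cartan formula for the Schouten--Nijenhuis bracket. My strategy is to show that both sides, viewed as functions of $\eta$ with $\xi$ fixed, are graded derivations of the same degree with respect to the wedge product on $\mathcal{G}erst^*_X$. For the left-hand side this is the odd Poisson identity defining $[\,,\,]_{\rm SN}$ together with $\debar$-linearity; for the right-hand side it follows from $\bi_{\eta_1\wedge\eta_2}=\bi_{\eta_1}\bi_{\eta_2}$ and the Leibniz rule for the graded commutator with $[\de,\bi_{\eta_1\wedge\eta_2}]$, using identity (3) already proved to move $\bi_{\eta_1}$ past $\bi_{\eta_2}$ without signs of obstruction. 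The same reasoning, applied symmetrically, shows both sides behave as graded derivations in $\xi$ as well. This reduces the identity to the case where $\xi$ and $\eta$ are chosen from a set of generators of $\mathcal{P}oly^*_X$ as a sheaf of bigraded algebras: antiholomorphic forms $f\in\mathcal{A}_X^{0,*}$ (for which $\bi_f$ is wedge multiplication), and holomorphic coordinate vector fields $\partial/\partial z_i$. On these generators, $[\xi,\eta]_{\rm G}$ reduces to $\xi(f)$-type expressions, and the identity $\bi_{[\xi,\eta]}=[\bi_\xi,[\de,\bi_\eta]]$ becomes the classical Cartan magic formula $[\bi_v,\mathcal{L}_w]=\bi_{[v,w]}$ (together with its $f$-variant $[\bi_f,[\de,\bi_v]]=-\bi_{v(f)}$), which is a direct local computation.

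The main obstacle is the bookkeeping in the second identity: the bigrading on $\mathcal{P}oly^*_X$, the desuspension shift, and the fact that $\de$ and $\debar$ play asymmetric roles together conspire to produce a nontrivial sign economy. Once the reduction to generators via the derivation property is correctly organized, the base cases are the standard Cartan formulas and no conceptual step remains; the risk is purely in the signs.
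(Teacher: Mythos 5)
Your proposal follows essentially the same route as the paper: identity (3) from $\bi_{\xi\wedge\eta}=\bi_\xi\bi_\eta$ and graded commutativity, identity (1) as the Leibniz rule for $\debar$ in local coordinates, and identity (2) by showing the discrepancy is a graded derivation in each argument (using the odd Poisson identity, $\bi_{\xi\wedge\eta}=\bi_\xi\bi_\eta$ and identity (3)) so that it suffices to check it on the local generators $\mathcal{A}^0_X$, $d\bar z_i$, $\partial/\partial z_j$, where it reduces to the classical Cartan formulas. The paper likewise leaves the base cases and the sign bookkeeping to the reader (citing Lemma 7 of \cite{CCK} and Lemma 7.21 of \cite{defomanifolds}), so your write-up is a faithful match.
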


\begin{proof}
The third equation has been proved above, and the first
equation is completely straightforward: it just expresses the
Leibniz rule for $\debar$. To prove the second equation, let
\[
\Phi(\xi,\eta)=\bi_{[\xi,\eta]}-[\bi_\xi,[\de,\bi_\eta]].
\]
Then, using
$\bi_{\xi\wedge\eta}=\bi_{\xi}\bi_{\eta}$, the
(shifted) odd Poisson identity
$[\xi,\eta_1\wedge\eta_2]=[\xi,\eta_1]\wedge\eta_2
+(-1)^{(\deg(\xi)-1)\deg(\eta_1)}\eta_1\wedge[\xi,\eta_2]$
and the third equation $[\bi_\xi,\bi_\eta]=0$, one finds
\[
\Phi(\xi,\eta_1\wedge
\eta_2)=\Phi(\xi,\eta_1)\bi_{\eta_2}+(-1)^{(\deg(\xi)-1)\deg(\eta_1)}
\bi_{\eta_1}\Phi(\xi,\eta_2)
\]
and
\[
\Phi(\xi_1\wedge
\xi_2,\eta)=\bi_{\xi_1}\Phi(\xi_2,\eta)+(-1)^{(\deg(\xi_2)(
\deg(\eta)-1)}
\Phi(\xi_1,\eta)\bi_{\eta_2}.
\]
Therefore, to prove $\Phi(\xi,\eta)=0$ for any $\xi,\eta$
one just needs to prove $\Phi(\xi,\eta)=0$ for $\xi,\eta\in
{\mathcal A}^0_X\cup
\{d\overline{z}_i\}\cup\{\partial/\partial z_j\}$, where
$z_1,\ldots,z_n$ are local holomorphic coordinates.
This is straightforward and it is left to
the reader: see also Lemma 7 of \cite{CCK} and Lemma 7.21
of \cite{defomanifolds}.
\end{proof}

\begin{corollary}\label{cor.cartanequalities}
The contraction map $\bi\colon {\mathcal P}oly^{*}_X\to
{\mathcal H}om^{*}(\mathcal{A}_X,\mathcal{A}_X)[-1]$ is a
Cartan homotopy and the induced morphism $\bl$ of sheaves of
differential graded Lie algebras is the \emph{holomorphic
Lie derivative}
\[ \bl\colon {\mathcal P}oly^{*}_X\to
{\mathcal H}om^{*}(\mathcal{A}_X,\mathcal{A}_X),\qquad
\xi\mapsto \bl_{\xi}=[\de,\bi_\xi],\quad \bl_{\xi}(\omega)=\de(\xi\contr
\omega)+(-1)^{\deg(\xi)}\xi\contr \de\omega.\]
Moreover, $\bl$ is an injective morphism of
sheaves.
\end{corollary}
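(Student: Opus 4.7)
My plan is to reduce the corollary to the three identities of Lemma~\ref{lem.cartanequalities}, and to handle the injectivity claim by a direct local computation using the fact that $\bi$ is already injective.

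First I would compute $\bl_\xi$ from its definition $\bl_a = d\bi_a + \bi_{da}$, keeping in mind that the differential on $\mathcal{H}om^*(\mathcal{A}_X,\mathcal{A}_X)$ is $[d,-]=[\de+\debar,-]$ and the differential on $\mathcal{P}oly^*_X$ is $D=-\debar$. Then
\[
\bl_\xi = [\de+\debar,\bi_\xi] + \bi_{D\xi} = [\de,\bi_\xi]+[\debar,\bi_\xi]-[\debar,\bi_\xi] = [\de,\bi_\xi],
\]
where in the last step I used the first identity of Lemma~\ref{lem.cartanequalities}. This both proves the intrinsic formula $\bl_\xi=[\de,\bi_\xi]$ and, after expanding the graded commutator with $\deg(\bi_\xi)=\deg(\xi)-1$, gives the explicit expression
\[
\bl_\xi(\omega)=\de(\xi\contr\omega)-(-1)^{\deg(\xi)-1}\xi\contr\de\omega=\de(\xi\contr\omega)+(-1)^{\deg(\xi)}\xi\contr\de\omega.
\]

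Next I would verify that $\bi$ is a Cartan homotopy in the sense of Definition~\ref{def:cartan}. The two conditions to check, $\bi_{[\xi,\eta]}=[\bi_\xi,\bl_\eta]$ and $[\bi_\xi,\bi_\eta]=0$, are now precisely the second and third identities of Lemma~\ref{lem.cartanequalities} (with $\bl_\eta=[\de,\bi_\eta]$ substituted in). The observation following Definition~\ref{def:cartan} then ensures automatically that $\bl$ is a morphism of sheaves of differential graded Lie algebras, so nothing further is needed for this part.

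The main obstacle is injectivity of $\bl$. Since $\bi$ is injective it would suffice to show that $[\de,\bi_\xi]=0$ forces $\xi=0$, but $\de$-closedness of $\bi_\xi$ as an endomorphism is not obviously restrictive enough; the argument must exploit that $\bi_\xi$ is not an arbitrary endomorphism but a contraction. I would work in local holomorphic coordinates $z_1,\dots,z_n$, writing $\xi=\sum_{A,B}\phi_{AB}\,d\bar z^A\wedge\partial/\partial z^B$ with $\phi_{AB}\in\mathcal{A}_X^0$, and test $\bl_\xi$ on forms of type $f\cdot dz^C\wedge d\bar z^D$. For $\omega=dz^C\wedge d\bar z^D$ with $\de\omega=0$ the identity $\bl_\xi(\omega)=\de(\xi\contr\omega)$, expanded and bidegree-separated, forces each $\phi_{AB}$ to be anti-holomorphic; then testing against $\omega=z_l\,dz^C\wedge d\bar z^D$ produces algebraic relations of the form $dz_l\wedge(\xi\contr\omega)\pm\xi\contr(dz_l\wedge\omega)=0$ whose linear independence, as $l$, $C$, $D$ vary, suffices to extract each coefficient $\phi_{AB}$ separately and conclude $\xi=0$. (A cleaner alternative, if one prefers, is to invoke the classical injectivity of the real Lie derivative on vector fields and argue by induction on the $\wedge^\bullet T_X$-degree and the $(0,\bullet)$-degree, using Remark~\ref{rem.cambiobasepercartan} to pass from single vector fields to general polyvectors.)
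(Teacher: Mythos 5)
Your proposal is correct and follows the paper's proof essentially verbatim: derive $\bl_\xi=[\de,\bi_\xi]$ from the first identity of Lemma~\ref{lem.cartanequalities}, read off the Cartan-homotopy conditions from the other two, and check injectivity in local coordinates (the paper leaves this last step to the reader, so your sketch of testing $\bl_\xi$ on forms $z_l\,dz^C\wedge d\bar z^D$ is a welcome elaboration, and it does work since those tests isolate the coefficients of $\xi$). The only cosmetic slip is the opening of your injectivity paragraph: injectivity of $\bi$ plays no role there, since showing $[\de,\bi_\xi]=0\Rightarrow\xi=0$ \emph{is} the injectivity of $\bl$ — but you immediately proceed correctly.
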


\begin{proof}
As in Section \ref{sec.cartan}, let
$\bl_\xi=[d,\bi_\xi]+\bi_{D\xi}$. Since $\bi_{D\xi}=-[\debar,
\bi_\xi]$, we find $\bl_\xi=[\de,\bi_\xi]$. The identity
$\bi_{[\xi,\eta]}=[\bi_\xi,[\de,\bi_\eta]]$ then reads
$\bi_{[\xi,\eta]}=[\bi_\xi,\bl_\eta]$ and this, together with
$[\bi_\xi,\bi_\eta]=0$, tells us that the contraction map $\bi$ is
a Cartan homotopy and, consequently, the holomorphic Lie
derivative $\bl_\xi=[\de,\bi_\xi]$ is the induced morphism of
sheaves of differential graded Lie algebras. Injectivity of $\bl$
is easily checked in local coordinates.
\end{proof}

The Corollary~\ref{cor.cartanequalities}, applied to global
sections shows that the contraction  map
\[\bi\colon \operatorname{Poly}_X
=\oplus_{i,j} A^{0,i}(\wedge^{-j} T_X) \to
\Hom^{*}(\mathcal{A}_X,\mathcal{A}_X)[-1]\] is a Cartan homotopy,
as well as its restriction to the Kodaira-Spencer DGLA
$KS_X=\oplus_{i} A^{0,i}(T_X)$.

\begin{remark} The composition of the inclusion
$KS_X\hookrightarrow \operatorname{Poly}_X$ with the iterated
contraction $\bi^{(n)}\colon \wedge^n\operatorname{Poly}_X\to
\Hom^*({\mathcal A}_X,{\mathcal A}_X)[-n]$ induces in cohomology a
graded antisymmetric map $ \bigwedge^n H^*(KS_X) \to
\Hom^{*}(H^*(X,\mathbb{C}),H^*(X,\mathbb{C}))[-n]$. In
particular, from the isomorphism of graded vector spaces
$H^1(KS^*_X)\simeq H^1(T_X)[-1]$ and the decalage isomorphism, the
iterated contraction gives a symmetric morphism
\[
\bi^{(n)}\colon \bigodot^n H^1(T_X) \to \Hom^{0}(H^*(X,\mathbb{C}),
H^*(X,\mathbb{C})).
\]
It is well known \cite{Greencime} and easy to prove that the image
of $\bi^{(n)}$ consists of self-adjoint operators with respect the
cup product on $H^*(X,\mathbb{C})$.\par

Under the identification $H^*(X,\mathbb{C})=\oplus_{p,q}H^q(X,\Omega^p_X)$, 
when $\dim X=n$ the morphism
$\bi^{(n)}$ reduces to the Yukawa coupling \[ \bi^{(n)}\colon
\bigodot^n H^1(T_X) \to \bigodot^2 H^n(X,\mathcal{O}_X).
\]
\end{remark}

Now, as in Section~\ref{sec.example-kaehler}, consider the DGLA
\[ L=\{f\in \Hom^*(A_X,A_X)\mid f(\ker\partial)\subseteq
\partial A_X\},\]
and let $\chi\colon L\hookrightarrow \Hom^*(A_X,A_X)$ be the
inclusion. Since, $\bl(\operatorname{Poly}_X)\subseteq L$, by
Corollary~\ref{cor.cartan}, we have a natural transformation of
deformation functors $\Def_{\operatorname{Poly}_X}\to \Def_{\chi}$
induced, at the Maurer-Cartan level, by the map $\xi\mapsto
(\bl_\xi,e^{\bi_\xi})$.\par

The functor $\Def_{\operatorname{Poly}_X}$ is called the functor
of generalized deformations of $X$, see \cite{BK}, and we will
denote it by $\widetilde{\Def}_{X}$. We have shown in
Section~\ref{sec.example-kaehler} that there exists a natural
isomorphism $\psi\colon \Def_{\chi}\to \Aut_{H^*(X,\mathbb{C})}$. 
By these considerations, we obtain:

\begin{theorem}\label{thm:pre-periods}
The linear map
\[ \operatorname{Poly}_X\to \widetilde{C}(\chi),\qquad
\xi\mapsto(\bl_{\xi},\bi_{\xi})\]%
is a linear $L_{\infty}$-morphism and induces a natural
transformation of functors
\[ \Phi\colon \widetilde{\Def}_{X}\to \Aut_{H^*(X,\mathbb{C})},\]
given at the level of Maurer-Cartan functors by the map $\xi\mapsto
\psi_{{\bi}_\xi}$.
\end{theorem}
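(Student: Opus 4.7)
The plan is to recognize the theorem as a direct application of Corollary~\ref{cor.cartan} in the geometric setting of Corollary~\ref{cor.cartanequalities}, followed by post-composition with the isomorphism $\psi\colon\Def_\chi\xrightarrow{\sim}\Aut_{H^*(X,\mathbb{C})}$ of Section~\ref{sec.example-kaehler}. Concretely, I would set $N=\operatorname{Poly}_X$, $M=\Hom^*(A_X,A_X)$, and take $L\subset M$ to be the subDGLA of endomorphisms sending $\ker\de$ into $\de A_X$ defined at the start of Section~\ref{sec.example-kaehler}.

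The one hypothesis of Corollary~\ref{cor.cartan} requiring an explicit check is the inclusion $\bl(\operatorname{Poly}_X)\subseteq L$. This is immediate from the formula
\[
\bl_\xi(\omega)=\de(\xi\contr\omega)+(-1)^{\deg(\xi)}\xi\contr\de\omega
\]
established in Corollary~\ref{cor.cartanequalities}: when $\de\omega=0$, the second summand vanishes and the first lies in $\de A_X$. That the contraction $\bi\colon\operatorname{Poly}_X\to\Hom^*(A_X,A_X)[-1]$ is itself a Cartan homotopy for $\bl$ is precisely the content of Corollary~\ref{cor.cartanequalities}. Corollary~\ref{cor.cartan} then simultaneously produces: the linear $L_\infty$-morphism $\xi\mapsto(\bl_\xi,\bi_\xi)$ from $\operatorname{Poly}_X$ to $\widetilde{C}(\chi)$; the induced transformation $\xi\mapsto(\bl_\xi,e^{\bi_\xi})$ of Maurer-Cartan functors $\MC_{\operatorname{Poly}_X}\to\MC_\chi$; and the consequent natural transformation $\widetilde{\Def}_X\to\Def_\chi$ of deformation functors.

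To conclude, I would post-compose with $\psi$. By the explicit formula for $\widetilde\psi$ given in Section~\ref{sec.example-kaehler}, the automorphism of $H^*(X,\mathbb{C})$ associated to a Maurer-Cartan element of the form $(\bl_\xi,e^{\bi_\xi})$ is $\widetilde\psi_{\bi_\xi}$, which is exactly the formula claimed in the theorem; gauge invariance and independence of the auxiliary choices have already been built into $\widetilde\psi$ in the previous section.

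No genuine obstacle is anticipated: the theorem is essentially a repackaging of previously established results. The only non-formal input is the verification $\bl(\operatorname{Poly}_X)\subseteq L$, which reduces to a single line using the Cartan formula and is the precise algebraic incarnation of the geometric observation motivating the whole construction, namely that a small variation of complex structure perturbs $d$ into $d+\bl_\xi$ in such a way that $\bl_\xi$ moves $\ker\de$ only within a $\de$-exact correction.
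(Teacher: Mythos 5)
Your proposal is correct and follows exactly the route the paper takes: the theorem is obtained by applying Corollary~\ref{cor.cartan} with $N=\operatorname{Poly}_X$, $M=\Hom^*(A_X,A_X)$ and the subDGLA $L$ of Section~\ref{sec.example-kaehler}, using Corollary~\ref{cor.cartanequalities} to know that $\bi$ is a Cartan homotopy, and then post-composing with the isomorphism $\psi$. Your explicit one-line verification of $\bl(\operatorname{Poly}_X)\subseteq L$ is a detail the paper merely asserts, and it is correct.
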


\begin{proposition}\label{prop.differential} Via the natural
identifications $H^1(\operatorname{Poly}_X)=\oplus_{i\geq
0}H^i(\wedge^iT_X)$ and $H^*(X,\mathbb{C})=
\oplus_{p,q}H^q(X,\Omega^p_X)$ given by the Dolbeault's
theorem and the $\de\debar$-lemma, the differential of $\Phi$,
\[
d\Phi\colon H^1(\operatorname{Poly}_X)\to 
\Hom^0(H^*(X,\mathbb{C}),H^*(X,\mathbb{C}))
\]
is identified with the contraction
\[
\left(\oplus_{i\geq 0}H^i(\wedge^iT_X)\right)\otimes
\left(\oplus_{p,q}H^q(X,\Omega^p_X)\right)\to
\oplus_{i,p,q}H^{q+i}(X,\Omega^{p-i}_X)
\]
\end{proposition}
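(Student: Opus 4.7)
The plan is to decompose $d\Phi$ using the factorization of $\Phi$ from Theorem~\ref{thm:pre-periods}, which presents $\Phi$ as the composite of the linear $L_\infty$-morphism $\Phi_\infty\colon\operatorname{Poly}_X\to\widetilde{C}(\chi)$, $\xi\mapsto(\bl_\xi,\bi_\xi)$, with the isomorphism $\psi\colon\Def_\chi\xrightarrow{\sim}\Aut_{H^*(X,\mathbb{C})}$. Because $\Phi_\infty$ is strictly linear, its tangent map on deformation functors is simply the map induced on first cohomology by the underlying chain map, $[\xi]\mapsto[(\bl_\xi,\bi_\xi)]\in H^1(\widetilde{C}(\chi))$. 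The differential of $\psi$ was described in Remark~\ref{rem.differential-psi}: it is the canonical isomorphism $H^1(\widetilde{C}(\chi))\xrightarrow{\sim}\Hom^0(H^*(X,\mathbb{C}),H^*(X,\mathbb{C}))$ obtained by projecting on the second factor of $C_\chi$ and then applying the $\de\debar$-lemma together with the De Rham isomorphism.

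Composing these two descriptions, $d\Phi$ is represented on the chain level by $\xi\mapsto\bi_\xi\in\Hom^0(\ker\de,A_X/\de A_X)$. Concretely, $d\Phi([\xi])$ sends a class $[\omega]\in H^*(X,\mathbb{C})$, represented by a $\de$-closed form $\omega$, to the class of $\bi_\xi\omega$ modulo $\de A_X$. To match this with contraction in Dolbeault cohomology, I would represent $[\xi]\in H^i(\wedge^iT_X)\subseteq H^1(\operatorname{Poly}_X)$ by a $\debar$-closed $\xi\in A^{0,i}(\wedge^iT_X)$ (equivalently $D\xi=0$), and a class in $H^q(X,\Omega^p_X)\subseteq H^{p+q}(X,\mathbb{C})$ by a form $\omega\in A^{p,q}$ that is simultaneously $\de$- and $\debar$-closed; such a representative is furnished by the $\de\debar$-lemma (e.g.\ a harmonic one). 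Then $\bi_\xi\omega\in A^{p-i,q+i}$, and the Cartan identity $\bi_{D\xi}=-[\debar,\bi_\xi]$ from Lemma~\ref{lem.cartanequalities}, combined with $D\xi=0$ and $\debar\omega=0$, gives $\debar(\bi_\xi\omega)=0$. Hence $\bi_\xi\omega$ is Dolbeault-closed and its class in $H^{q+i}(X,\Omega^{p-i}_X)$ represents $d\Phi([\xi])([\omega])$, which is exactly the contraction pairing on bigraded pieces.

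The main obstacle is not a hard computation but careful bookkeeping through three layers of identifications: the tangent of a linear $L_\infty$-morphism (trivial), the isomorphism $H^1(\widetilde{C}(\chi))\cong\Hom^0(H^*(\ker\de),H^*(A_X/\de A_X))$ coming from the second projection (Remark~\ref{rem.differential-psi}), and the Dolbeault/De Rham/$\de\debar$-lemma chain of isomorphisms on both sides. The key chain-level compatibility which makes the Dolbeault representative well-defined is precisely the Cartan identity $\bi_{D\xi}=-[\debar,\bi_\xi]$ of Lemma~\ref{lem.cartanequalities}; everything else is routine.
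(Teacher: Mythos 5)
Your proposal is correct and follows essentially the same route as the paper: both factor $d\Phi$ as $d\psi\circ\bi$ using the strict linearity of the $L_\infty$-morphism together with Remark~\ref{rem.differential-psi}, and both use the Cartan identity $\bi_{D\xi}=-[\debar,\bi_\xi]$ of Lemma~\ref{lem.cartanequalities} to identify the chain-level map $\xi\mapsto\bi_\xi$ with contraction in Dolbeault cohomology. Your explicit choice of $\de$- and $\debar$-closed representatives simply spells out the paper's commutative diagram of complexes and its passage to cohomology.
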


\begin{proof} By Lemma~\ref{lem.cartanequalities} we have a
commutative diagram of differential complexes
\[
\xymatrix{
      (\Hom^{*-1}(A_X,A_X),-{\rm ad}_\debar) \ar[r]
 &
(\Hom^{*-1}(\ker\de,A_X/\de A_X),-{\rm ad}_d)
\\
    (\operatorname{Poly}_X,D)\ar[u]^{\bi}\ar[ur]_{\bi}&
\\
    }
\]
where we have used the fact that on $\ker\de$ and on
$A_X/\de A_X$ the differentials ${\rm ad}_d$ and ${\rm
ad}_\debar$ coincide. Using the identification
$H^*_\debar(A_X)=H^*(X,\mathbb{C})$ coming from
Dolbeault's theorem and the $\de\debar$-lemma, and by
Remark~\ref{rem.differential-psi}, the above commutative
diagram induces the commutative diagram in cohomology
\[
\xymatrix{
      \Hom^0(H^*(X,\mathbb{C}),H^*(X,\mathbb{C}))
 &
\Hom^0(H^*(\ker\de),H^*(A_X/\de
A_X))\ar[l]_{d\psi\,\,}
\\
    H^1(\operatorname{Poly}_X)\ar[u]^{\bi}\ar[ur]_{\bi}&
\\
    }
\]
Since, by Theorem~\ref{thm:pre-periods}, the differential of
$\Phi$ is $d\Phi=d\psi\circ\bi$, this ends the proof.
\end{proof}

As a corollary of Theorem~\ref{thm:pre-periods}, the linear map
$\xi\mapsto(\bl_\xi,\bi_\xi)$ induces a morphism of obstruction
spaces $H^2(\operatorname{Poly})\to\Hom^1(H^*(X,\mathbb{C}),
H^*(X,\mathbb{C}))$ commuting with $\Phi$ and obstruction
maps. The same argument of Proposition~\ref{prop.differential}
shows that this morphism is naturally identified with the
contraction
\[
\left(\oplus_{i\geq 0}H^{i+1}(\wedge^iT_X)\right)\otimes
\left(\oplus_{p,q}H^q(X,\Omega^p_X)\right)\to
\oplus_{i,p,q}H^{q+i+1}(X,\Omega^{p-i}_X).
\]
Since the deformation functor $\Aut_{H^*(X,\mathbb{C})}$ is
smooth, we obtain the following version of the so-called Kodaira
principle (ambient cohomology annihilates obstruction):

\begin{proposition}\label{prop:kodaira-principle}
The obstructions to extended deformations of a compact
K\"{a}hler manifold $X$ are contained in the subspace
\[
\bigoplus_{i\geq 0}\bigcap_{p,q}\ker\left(
H^{i+1}(\wedge^iT_X)\xrightarrow{\bi}
\Hom\left(H^q(X,\Omega^p_X),H^{q+i+1}(X,\Omega^{p-i}_X
\right)\right)
\]
of $H^2(\operatorname{Poly}_X)$.
\end{proposition}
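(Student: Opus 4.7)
The argument is essentially a standard smoothness/obstruction chase, built on top of the identifications already set up in the paragraph preceding the statement. My plan is as follows.

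First, I would invoke the general principle that a linear $L_\infty$-morphism between DGLAs (or their cone $L_\infty$-algebras) induces a morphism of tangent--obstruction theories on the associated deformation functors: tangent spaces map to tangent spaces, obstruction spaces to obstruction spaces, and each obstruction to lifting a deformation is sent to the corresponding obstruction on the target side. Applying this to the linear $L_\infty$-morphism $\operatorname{Poly}_X \to \widetilde{C}(\chi)$, $\xi \mapsto (\bl_\xi,\bi_\xi)$ of Theorem~\ref{thm:pre-periods}, composed with the natural isomorphism $\Def_\chi \simeq \Aut_{H^*(X,\mathbb{C})}$, yields a morphism of obstruction spaces
\[
o\colon H^2(\operatorname{Poly}_X)\longrightarrow \Hom^1(H^*(X,\mathbb{C}),H^*(X,\mathbb{C}))
\]
which intertwines the obstruction maps of $\widetilde{\Def}_X$ and $\Aut_{H^*(X,\mathbb{C})}$.

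Next, I would observe that $\Aut_{H^*(X,\mathbb{C})}$ is the functor of $A$-points of the algebraic group $\prod_n GL(H^n(X,\mathbb{C}))$ and hence is smooth; in particular every obstruction in $\Hom^1(H^*(X,\mathbb{C}),H^*(X,\mathbb{C}))$ vanishes. Combined with the naturality of the obstruction morphism, this forces every obstruction to generalized deformations of $X$ to lie in $\ker o$.

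It then remains to identify $o$ with the contraction
\[
\Bigl(\bigoplus_{i\geq 0}H^{i+1}(\wedge^iT_X)\Bigr)\otimes
\Bigl(\bigoplus_{p,q}H^q(X,\Omega^p_X)\Bigr)\longrightarrow
\bigoplus_{i,p,q}H^{q+i+1}(X,\Omega^{p-i}_X),
\]
and to rewrite the kernel in the asserted form. The identification is obtained by repeating \emph{verbatim} the diagram chase of Proposition~\ref{prop.differential}, with the grading shifted by one: Lemma~\ref{lem.cartanequalities} provides the commutative diagram of complexes whose cohomology, under the Dolbeault and $\de\bar\partial$-lemma identifications, realises $o$ as $d\psi \circ \bi$ in degree $2$ rather than degree $1$. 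The only mild bookkeeping step is to recognise that $\ker o$ is exactly the subspace displayed in the statement, which follows by decomposing the contraction according to $(i,p,q)$-type and noting that a class in $H^{i+1}(\wedge^i T_X)$ belongs to $\ker o$ iff it annihilates \emph{every} $H^q(X,\Omega^p_X)$. No step is really the "hard part"; the only thing one has to be careful about is that the obstruction-theoretic version of the argument of Proposition~\ref{prop.differential} genuinely goes through, which it does because $L_\infty$-morphisms preserve the whole Maurer-Cartan deformation package, not merely the tangent spaces.
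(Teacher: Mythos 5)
Your proposal is correct and follows the same route as the paper: the linear $L_\infty$-morphism $\xi\mapsto(\bl_\xi,\bi_\xi)$ of Theorem~\ref{thm:pre-periods} induces a morphism of obstruction theories, identified with the contraction by the degree-shifted argument of Proposition~\ref{prop.differential}, and smoothness of $\Aut_{H^*(X,\mathbb{C})}$ forces the obstructions into the kernel, which decomposes by $(i,p,q)$-type into the displayed subspace. This is exactly the paper's (inline) proof.
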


As an immediate corollary we recover the fact  that extended
deformations of compact Calabi-Yau manifolds are unobstructed
\cite{BK}. Indeed, if $X$ is an $n$-dimensional compact Calabi-Yau
manifold, then for any $i\geq 0$ the contraction pairing
\[
H^{i+1}(\wedge^iT_X)\otimes H^{0}(X,\Omega^n_X)\to
H^{i+1}(X,\Omega^{n-i}_X)
\]
is nondegenerate.

\bigskip
\section{Restriction to classical deformations}
\label{sec.kodairaspencer}

Let $(X,{\mathcal O}_X)$ be a complex manifold.
It is well known that the infinitesimal deformations of the
complex structure of $X$ are governed by the Kodaira-Spencer
DGLA of $X$. More precisely, there is a natural isomorphism
of deformation functors
\[
\Def_{KS_X}\xrightarrow{\sim}\Def_X
\]
mapping a Maurer-Cartan element $\xi\in
A^{0,1}_X(T_X)$
to the complex manifold $(X,{\mathcal O}_\xi)$,
 where the structure sheaf ${\mathcal
O}_\xi$ is defined by
\[
{\mathcal O}_\xi= \ker \{\debar_\xi\colon
{\mathcal A}^{0,0}_X\to {\mathcal
A}^{0,1}_X\}=\{f\in
\mathcal{A}^0_X\mid (\debar+\bl_{\xi})f=0\},
\]
see  \cite{clemens}, \cite{GoMil2},
\cite[Ex. 3.4.1]{K} or \cite{Iaconophd}. The above
equations have to be intended as identities among
functors of Artin rings; namely, they mean that for any local
Artin algebra $(B,{\mathfrak m}_B)$ the Kuranishi data
$\xi\in
\MC_{KS_X}(B)\subseteq A^{0,1}_X(T_X)\otimes\mathfrak{m}_B$
are mapped to the family
$(X,{\mathcal O}_\xi)$ of complex manifolds over $\Spec(B)$,
 whose structure sheaf ${\mathcal
O}_\xi$ is defined by
\[
{\mathcal O}_\xi= \ker \{\debar_\xi\colon
{\mathcal A}^{0,0}_X\otimes B\to {\mathcal
A}^{0,1}_X\otimes B\}=\{f\in
\mathcal{A}^0_X\otimes B\mid (\debar+\bl_{\xi})f=0\}.
\]

Let now
\[
A_X=F^0_\xi\supseteq F^1_\xi\supseteq\cdots\] be the Hodge
filtration of differential forms on the complex manifold
$(X,{\mathcal O}_\xi)$, i.e. for every $m\ge 0$ $F^m_{\xi}$ is the
complex of global sections of the differential ideal sheaf
$\mathcal{F}^m_{\xi} \subseteq \mathcal{A}_X$ generated by
$(d\mathcal{O}_{\xi})^m$. Again, here we are writing ${\mathcal
A}_X$ for the functor of Artin rings defined by $B\mapsto
{\mathcal A}_X\otimes B$. If $X$ is a compact K\"ahler manifold,
the  cohomology of $(F^m_{\xi},d)$ naturally embeds into the
cohomology of $(A_X,d)$. Since the dimension of $H^*(F^m_\xi,d)$
is independent of $\xi$, one can look at $H^*(F^m_\xi,d)$ as  a
different linear embedding of $H^*(F^m,d)$ into $H^*(X;{\mathbb
C})$, hence $\xi\mapsto H^*(F^m_\xi,d)$ is a map
\[
\Def_X\to \Grass_{H^*(F^m),H^*(X;{\mathbb C})},
\]
called the $m^{\rm th}$ \emph{period map}.\par

The inclusion of DGLAs $KS_X\hookrightarrow \operatorname{Poly}_X$
induces an embedding of deformation functors $\Def_X\to
\widetilde{\Def}_X$. Hence, the restriction of $\Phi$ to $\Def_X$
is a natural transformation
\[
\Phi\colon \Def_X\to  \Aut_{H^*(X,\mathbb{C})}.
\]

\begin{theorem} 
For any $m\geq 0$, the map $\Phi\colon\Def_X\to \Aut_{H^*(X,\mathbb{C})}$ 
lifts the $m^{\rm th}$ 
period map $\Def_X\to \Grass_{H^*(F^m),H^*(X;{\mathbb C})}$.
\end{theorem}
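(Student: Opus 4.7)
The plan is to work at the level of Maurer--Cartan representatives. Given a classical infinitesimal deformation $\xi\in\MC_{KS_X}(A)$ and a cohomology class $[\omega]\in H^*(F^m)$, I would exhibit a representative of $\Phi_\xi([\omega])$ lying in $F^m_\xi\subseteq A_X$. Since the Hodge numbers of a compact K\"ahler manifold remain locally constant under small deformations, $H^*(F^m_\xi)$ and $H^*(F^m)$ are locally free $A$-submodules of $H^*(X,\mathbb{C})\otimes A$ of the same rank, so the inclusion $\Phi_\xi(H^*(F^m))\subseteq H^*(F^m_\xi)$ forces the equality required for $\Phi$ to lift the period map $p$.

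For the choice of representative, I would invoke the $\de\debar$-lemma: every class in $H^*(F^m)$ admits a $d$-closed representative $\omega_0\in\bigoplus_{p\geq m}\mathcal{A}_X^{p,q}$ that is in particular $\de$-closed (for instance its harmonic representative). Theorem~\ref{thm:pre-periods} together with the explicit formula for $\widetilde{\psi}_{\bi_\xi}$ from Section~\ref{sec.example-kaehler} then yields
\[
\Phi_\xi([\omega])=[e^{\bi_\xi}(\omega_0-\de\beta)]
\]
for any $\beta\in A_X$ with $d(e^{\bi_\xi}(\omega_0-\de\beta))=0$. The whole proof reduces to showing that $\beta$ can be chosen so that this representative actually sits in $F^m_\xi$.

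This is where the classicality of $\xi\in KS_X$ enters, via Newlander--Nirenberg: there exist local deformed-holomorphic coordinates $w_1,\ldots,w_n\in \mathcal{O}_\xi$, whose differentials $dw_i$ generate $F^{\bullet}_\xi$ locally as a differential ideal. The defining equation $(\debar+\bl_\xi)w_i=0$, combined with the bidegree vanishing $\bi_\xi^2\de w_i=0$, produces the identity
\[
dw_i=(1-\bi_\xi)\de w_i=e^{-\bi_\xi}\de w_i.
\]
Since $e^{\bi_\xi}$ is an algebra automorphism of $A_X$ (the exponential of the derivation $\bi_\xi$), this identity propagates through wedge products to match the local generators of $F^m$ with those of $F^m_\xi$ under $e^{\pm \bi_\xi}$, up to $d$-exact corrections.

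The main obstacle is the final bookkeeping step: verifying that $\beta$ can be chosen so that the $d$-exact correction $e^{\bi_\xi}\de\beta$ itself lies in $F^m_\xi$, so that $e^{\bi_\xi}(\omega_0-\de\beta)$ belongs to $F^m_\xi$ on the nose rather than merely modulo exact forms. This is a purely homological argument: it relies on $(F^{\bullet}_\xi,d)$ being a filtration by subcomplexes of $(A_X,d)$ and on the $\de\debar$-lemma for the deformed complex manifold $(X,\mathcal{O}_\xi)$, which together allow the exact discrepancy to be reabsorbed at the appropriate filtration level.
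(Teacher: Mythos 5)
Your overall strategy coincides with the paper's: produce a $d$-closed representative of $\Phi_\xi([\omega])$ lying in $e^{\bi_\xi}F^m=F^m_\xi$, identify $e^{\pm\bi_\xi}$ as matching $\mathcal{F}^m$ with $\mathcal{F}^m_\xi$ via the computation $d\mathcal{O}_\xi=e^{\pm\bi_\xi}\de\,\mathcal{O}_\xi$, and upgrade the resulting inclusion of subspaces to an equality by rank considerations. However, the step you yourself flag as ``the main obstacle'' --- choosing $\beta$ so that the correction $e^{\bi_\xi}\de\beta$ lies in $F^m_\xi$ --- is precisely the crux of the proof, and you have not carried it out: asserting that it ``relies on the $\de\debar$-lemma for the deformed complex manifold $(X,\mathcal{O}_\xi)$'' is not an argument. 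Over an Artinian base that relative $\de\debar$-lemma is itself a nontrivial statement (essentially equivalent to the degeneration of the relative Hodge--de Rham spectral sequence, i.e., to the local constancy you are already invoking for the rank count), so appealing to it here leaves the reabsorption of the exact discrepancy unproved and has a circular flavor.

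The paper closes this gap with an elementary bidegree argument that needs no deformed $\de\debar$-lemma. The defining condition $d\,e^{\bi_\xi}(\omega-\de\beta)=0$ is equivalent, after conjugating by $e^{\bi_\xi}$, to $(\debar+\bl_\xi)\de\beta=\bl_\xi(\omega)$. Because $\xi$ is classical, both $\bl_\xi$ and $\debar+\bl_\xi$ are homogeneous of bidegree $(0,1)$ and hence preserve the holomorphic degree; since $\omega\in\oplus_{p\ge m}A^{p,*}_X$, projecting the equation onto the components of holomorphic degree $\ge m$ gives
\[
\bl_\xi(\omega)=\bigl((\debar+\bl_\xi)\de\beta\bigr)_{\ge m}=(\debar+\bl_\xi)\de(\beta_{\ge m-1}),
\]
so $\beta$ may simply be replaced by $\beta_{\ge m-1}$, and then $\de(\beta_{\ge m-1})\in F^m$ on the nose. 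This projection trick is the missing idea; without it (or a genuine proof of the relative $\de\debar$-lemma over the Artinian base) your argument does not go through. The rest of your outline --- the harmonic choice of $\omega_0$, the Newlander--Nirenberg computation $dw_i=e^{\mp\bi_\xi}\de w_i$ identifying the generators of $\mathcal{F}^m_\xi$ with those of $e^{\pm\bi_\xi}\mathcal{F}^m$, and the final rank argument --- matches the paper and is fine.
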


\begin{proof}
Let $\xi$ be a Maurer-Cartan element in $KS_X$. Then $(\bl_\xi,e^{\bi_\xi})\in \MC_\chi$ is a Maurer-Cartan element with $\bl_\xi$ of bidegree $(0,1)$. Let $[\omega]$ be an element in $H^*(F^m)$. To compute $\psi_{\xi}[\omega]$ we pick a $\de$-closed representative, for the class $[\omega]$, which we can assume to be $\omega$, and then we take the cohomology class of a $d$-closed representative of $e^{\bi_\xi}\omega$ in ${e^{\bi_\xi}(\ker\de)}/{e^{\bi_\xi}(\de
A_X)}$, i.e., we have $\psi_\xi[\omega]=[ e^{\bi_\xi}
(\omega-\de \beta)]$ for any $\beta\in A_X$ such that $de^{\bi_\xi}
(\omega-\de \beta)=0$. For such a $\beta$ we have
\[
0=e^{-\bi_\xi}de^{\bi_\xi}
(\omega-\de \beta)=-\debar\de\beta+\bl_\xi(\omega)-\bl_\xi(\de\beta).
\]
and so
\[
(\debar+\bl_\xi)\de\beta=\bl_\xi(\omega).
\]
Write $\eta^{}_{<m}$ and $\eta^{}_{\geq m}$ for the components of a differential form $\eta$ in 
$\oplus_{i<m}A_X^{i,*}$ and in $\oplus_{i\geq m}A_X^{i,*}$, respectively.  
Since both $\bl_\xi$ and $(\debar+\bl_\xi)$ are homogeneous of bidegree $(0,1)$, we have
\[
\bl_\xi(\omega)=(\bl_\xi(\omega))_{\geq m}^{}=((\debar+\bl_\xi)\de\beta)_{\geq m}^{}
=(\debar+\bl_\xi)\de(\beta_{\geq m-1}^{}).
\]
Hence $\psi_{\bi_\xi}[\omega]=[ e^{\bi_\xi}
(\omega-\de (\beta_{\geq m-1}^{}))]\in H^*(e^{\bi_\xi}F^m)$, and so
\[
\Phi_\xi(H^*(F^m))=H^*(e^{\bi_\xi}F^m).
\]
On the other hand, the period of the infinitesimal deformation
$\mathcal{O}_{\xi}=\ker(\debar+\bl_{\xi})$ is
$H^*(F^m_{\xi})\subseteq H^*(A_X)$, where $F^m_{\xi}$ is the
complex of global sections
of the differential ideal sheaf $\mathcal{F}^m_{\xi}\subseteq
\mathcal{A}_X$ generated
by
$(d\mathcal{O}_{\xi})^m$.
Since $e^{\bi_{\xi}}$ is the identity on $\mathcal{A}_X^{0,0}$, by
Lemma~\ref{lem.newdifferential} we can write
\[
e^{-\bi_{\xi}}(d\mathcal{O}_{\xi})=e^{-\bi_{\xi}}de^{\bi_{\xi}}\mathcal{O}_{\xi}=
(\de+\debar+\bl_{\xi})\mathcal{O}_{\xi}=
\de \mathcal{O}_{\xi}\subseteq \de\mathcal{A}_X^{0,0}
\subseteq \mathcal{A}_X^{1,0}.\]
Since $e^{\bi_{\xi}}\colon \mathcal{A}_X\to
\mathcal{A}_X$ is a
morphism of sheaves of differential graded commutative algebras, we get $e^{-\bi_{\xi}}(\mathcal{F}^m_{\xi})\subseteq \mathcal{F}^m$ and then, by rank considerations,
$e^{\bi_{\xi}}(\mathcal{F}^m)=\mathcal{F}^m_{\xi}$.
Hence
\[
\Phi_\xi(H^*(F^m))=H^*(\mathcal{F}^m_{\xi})
\]
\end{proof}

\begin{remark}
It has been shown in \cite{Periods} that also the $m^{\rm th}$
period map
$\Def_X\to
\Grass_{H^*(F^m),H^*(X;{\mathbb C})}$ is induced by an
$L_\infty$-morphism. Namely, let $\chi_m\colon
L_m\hookrightarrow \Hom^*(A_X,A_X)$ the inclusion of the
subalgebra
\[
L_m=\{f\in \Hom^*(A_X,A_X)\,|\,f(F^m)\subseteq F^m\}
\]
in the DGLA of endomorphisms of $A_X$. Then
$\Def_{\chi_m}\simeq \Grass_{H^*(F^m),H^*(X)}$ and the map
$\xi\mapsto (\bl_\xi,\bi_\xi)$ is an $L_\infty$-morphism
between $KS_X$ and $C_{\chi_m}$ inducing the $m^{\rm th}$
period map.
\end{remark}

\end{document}